\documentclass[11pt]{amsart}
\usepackage{graphicx}
  \usepackage{subfig}
        \usepackage{multicol}
        \usepackage{wrapfig}
        \usepackage{float}
\usepackage{amsthm, amsfonts, amsmath,color,hyperref}
\usepackage{comment}
\usepackage{verbatim}
\usepackage{paralist}
\usepackage{amsbsy,paralist}
\usepackage[backrefs]{amsrefs}
\usepackage[dvipsnames]{xcolor}
\usepackage{pdfpages}
\usepackage{url}
\usepackage{graphicx}
\usepackage{setspace}
\usepackage{enumitem}
\usepackage{mathtools}
\usepackage{fullpage}
\allowdisplaybreaks

\theoremstyle{definition}
\newtheorem{definition}{Definition}[section]

\newtheorem{theorem}{Theorem}[section]
\newtheorem{proposition}{Proposition}[section] 
\newtheorem{lemma}{Lemma}[section] 
\newtheorem{corollary}{Corollary}[section]

\newcommand{\N}{{\mathbb N}}

\newcommand{\alp}{{\sqrt{(1+m+n)^2-4mn}}}

\newcommand{\alpy}{{\sqrt{(1+(m+n)y)^2-4mny^2}}}

\title{Bin Decompositions}

\author{Daniel Gotshall}
\address{Department of Mathematics, Saint Peter's University}
\email{\textcolor{blue}{\href{mailto:dgotshall16@saintpeters.edu}{dgotshall16@saintpeters.edu}}}

\author{Pamela E. Harris}
\address{Department of Mathematics and Statistics, Williams College, United States}
\email{\textcolor{blue}{\href{mailto:peh2@williams.edu}{peh2@williams.edu}}}
\thanks{P.\,E. Harris was supported by NSF award DMS-1620202.}

\author{Dawn Nelson}
\address{Department of Mathematics, Saint Peter's University}
\email{\textcolor{blue}{\href{mailto:dnelson1@saintpeters.edu}{dnelson1@saintpeters.edu}}}

\author{Maria D. Vega}
\address{Department of Mathematical Sciences, United States Military Academy}
\email{\textcolor{blue}{\href{mailto:maria.vega@usma.edu}{maria.vega@usma.edu}}}

\author{Cameron Voigt}
\address{Department of Mathematical Sciences, United States Military Academy}
\email{\textcolor{blue}{\href{mailto:cdv1218@gmail.com}{cdv1218@gmail.com}}}

\begin{document}
\subjclass[2010]{11B39, 65Q30, 60B10}

\keywords{Zeckendorf decompositions, bin decompositions,  Gaussian behavior, integer decompositions}

\maketitle

\begin{abstract}
It is well known that every positive integer can be expressed as a sum of nonconsecutive Fibonacci numbers provided the Fibonacci numbers satisfy $F_n =F_{n-1}+F_{n-2}$ for $n\geq 3$, $F_1 =1$ and $F_2 =2$. 
In this paper, for any $n,m\in\mathbb{N}$ we create a sequence called the $(n,m)$-bin sequence with which we can define a notion of a legal decomposition for every positive integer. These sequences are  not always positive linear recurrences, which have been studied in the literature, yet we prove, that like positive linear recurrences, these decompositions exist and are unique. Moreover, our main result proves that the distribution of the number of summands used in the $(n,m)$-bin legal decompositions displays Gaussian behavior. 
\end{abstract}

\section{Introduction} 
In 1972 Edouard Zeckendorf proved that any positive integer can be uniquely decomposed as a sum of non-consecutive Fibonacci numbers provided we use the recurrence $F_1=1$, $F_2=2$, and $F_n=F_{n-1}+F_{n-2}$ for $n\geq 3$ \cite{Ze}.
Since then numerous researchers have generalized Zeckendorf's theorem to other recurrence relations \cites{miller,CFHMN1,DDKMMV,DDKMV, KKMW, lengyel}. Most work involved recurrence relations with positive leading terms, called positive linear recurrences. That was until Catral, Ford, Harris, Miller, and Nelson generalized these results to the $(s,b)$-Generacci sequences and to the Fibonacci Quilt sequence, which are defined by non-positive linear recurrences \cites{CFHMN1,CFHMN2,newbehavior}, and Dorward, Ford, Fourakis, Harris, Miller, Palsson, and Paugh to the  $m$-gonal sequences, which arise from a geometric construction via inscribed $m$-gons \cites{mgonpaper,individualgaps}. 
The main results in these studies involved determining the uniqueness of the decompositions of nonnegative integers using the numbers in these new sequences, determining whether the behaviour arising from the average number of summands in these decompositions is Gaussian, and other related results.

A way to interpret the creation of the $(s,b)$-Generacci sequences is to imagine an  infinite number of bins each containing $b$ distinct positive integers. Given a number $\ell\in\mathbb{N}$, we decompose it as a sum of elements in the sequence such that their sum is $\ell$, and the terms satisfy that 1) no two numbers in the sequences used in the decomposition appear in the same bin, and that 2) we do not use numbers in $s$ bins to the left and right of any bin containing a summand used in the decomposition of $\ell$. If such a decomposition of $\ell$ exists using the numbers in the sequence, we then say that $\ell$ has a legal decomposition. If every positive integer $\ell$ has a legal decomposition, then we call the sequence of numbers satisfying this property the $(s,b)$-Generacci sequence. Note that the $(1,1)$-Generacci sequence gives rise to the Fibonacci sequence, as we have bins with only one integer and we cannot use any consecutive integers in any decomposition. 

Motivated by the bin construction used in the $(s,b)$-Generraci sequences, we create the $(n,m)$-bin sequences. These sequences are defined by nonpositive linear recurrences and depend on the positive integer parameters $s,b$ for Generacci sequences and $n,m$ for bin sequences.
The terms of an  $(n,m)$-bin sequence $\{a_x\}_{x=0}^\infty$ can be pictured via
\begin{equation}
\underbracket{ \underbracket{a_0,\ldots,a_{n-1}}_{n}\ ,\underbracket{a_{n},\ldots,a_{n+m-1}}_{m}}_{\mathcal{B}_0}
\ ,\ldots,\ 
\underbracket{\underbracket{a_{(n+m)k},\ldots,a_{(n+m)k+n-1}}_{n}\ ,\underbracket{a_{(n+m)k+n},\ldots,a_{(n+m)k+n+m-1}}_{m}}_{\mathcal{B}_{k}}\ ,\ldots.\end{equation}
Note that the first term  in the sequence is indexed by 0. Notice also that there are $n$ terms in the first bin and $m$ terms in the next. The number of terms in each subsequent bin alternates between $n$ and $m$. We use the notation $\mathcal{B}_k$ to indicate a pair of bins of size $n$ and $m$, in that order. 
Given a term in the sequence, $a_x$, we can determine which $\mathcal{B}_k$ contains $a_x$ and whether $a_x$ is in the $n$ or $m$ sized bin by using the division algorithm to write $x=(n+m)k+i$. If $0\leq i\leq n-1$ then $a_x$ is in the $n$ sized bin. If $n\leq i\leq m+n-1$ then $a_x$ is in the $m$ sized bin. For example,
consider the (2,3)-bin sequence and term $a_{44}$. Since $44 = (2+3)8+4$, $a_{44}\in \mathcal{B}_8$ and since $i=4\geq 2=n$, $a_{44}$ is the third term in the $m=3$ size bin.

Before defining how we construct the sequences, we need to establish the notion of a legal decomposition.

\begin{definition}
Let an increasing sequence of integers $\{a_i\}_{i=0}^\infty$, divided into bins of sizes $n$ and $m$ be given. For any $n,m\in\mathbb{N}$, a {\em $(n,m)$-bin legal decomposition} of an integer using summands from this sequence is a decomposition in which no two summands are from the same or adjacent bins. 
\end{definition}

As described in \cite{DDKMMV}, this notion of legal decompositions is an $f$-decomposition defined by the function $f:\N_0\rightarrow\N_0$ with 
\begin{align}f(j)=\begin{cases}
m+i & \mbox{if } j\equiv i\!\!\!\mod m+n \quad\mbox{ and
} 0\leq i\leq n-1\\
i & \mbox{if } j\equiv i\!\!\!\mod m+n  \quad\mbox{ and } n\leq i\leq m+n-1.
\end{cases}\label{eq:fdef}
\end{align}
In other words, if $a_j$ is a summand in a $(n,m)$-bin legal decomposition, then none of the previous $f(j)$ terms ($a_{j-f(j)},a_{j-f(j)+1}, \ldots, a_{j-1}$) are in the decomposition. 
Consider the (2,3)-bin legal decompositions. Then $f:\N_0\rightarrow\N_0$ is the periodic function
\[ \{f(j)\}=\{ 3, 4, 2, 3, 4, 3, 4, 2, 3, 4,\ldots     \}.  \]
Note $f(44)=4$, so if $a_{44}$ is a term in an $(n,m)$-bin legal decomposition, then  $a_{40},a_{41},a_{42}, a_{43}$ are not in the decomposition. Notice that $a_{42}, a_{43}$ are other terms in the 3-bin (denoting the bin of size 3) that contains $a_{44}$ and that $a_{40},a_{41}$ are the two terms in the previous 2-bin (denoting the bin of size 2).

Through an immediate application of Theorems 1.2 and 1.3 from \cite{DDKMMV} we can establish that for any $n,m\in\mathbb{N}$, $(n,m)$-bin legal decompositions are unique and we get Proposition \ref{1.1}.
\begin{proposition}\label{1.1}
For each pair of $n,m\in\mathbb{N}$ there is a unique sequence  such that every positive integer has a unique $(n,m)$-bin legal decomposition.
\end{proposition}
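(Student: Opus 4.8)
The plan is to recognize a $(n,m)$-bin legal decomposition as an $f$-decomposition in the precise sense of \cite{DDKMMV}, with $f$ the function defined in~\eqref{eq:fdef}, and then to invoke the general existence and uniqueness results (Theorems 1.2 and 1.3) of that paper. Those theorems assert that for any function $f\colon\N_0\to\N_0$ there is a unique increasing integer sequence for which every positive integer admits one and only one legal $f$-decomposition, the sequence being built greedily term by term. Thus the whole content of Proposition~\ref{1.1} reduces to checking that our combinatorial bin rule is literally the $f$-rule for this particular $f$; once that identification is made, existence and uniqueness are inherited verbatim.

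First I would verify that $f$ as given in~\eqref{eq:fdef} is a genuine function $\N_0\to\N_0$, which is precisely the hypothesis the cited theorems require. Writing $j=(n+m)k+i$ by the division algorithm with $0\le i\le m+n-1$, exactly one of the two cases applies and in both $f(j)$ is a nonnegative integer; in particular $f(0)=m$, which imposes no constraint since no terms precede $a_0$. Next I would translate the bin rule into a count of blocked terms. If $a_j$ lies in an $n$-bin, i.e.\ $0\le i\le n-1$, then $a_j$ is the $(i+1)$st entry of its bin, so $i$ earlier terms lie in the same bin, while the bin immediately to its left is the $m$-bin of $\mathcal{B}_{k-1}$, contributing $m$ terms; blocking the same bin together with the left-adjacent bin thus blocks exactly $i+m=f(j)$ preceding terms. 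If instead $a_j$ lies in an $m$-bin, i.e.\ $n\le i\le m+n-1$, then it is the $(i-n+1)$st entry of that bin, with $i-n$ earlier same-bin terms, and its left-adjacent bin is the $n$-bin of $\mathcal{B}_k$, contributing $n$ terms, for a total of $(i-n)+n=i=f(j)$. In each case the $f(j)$ immediately preceding terms are precisely the entries of the same bin occurring before $a_j$ together with all entries of the left-adjacent bin.

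Finally I would observe that, because bin adjacency is symmetric, the one-sided $f$-rule captures the full two-sided legality condition: for any two summands $a_{j'}$ and $a_{j}$ with $j'<j$ lying in the same or in adjacent bins, $a_{j'}$ is among the $f(j)$ terms blocked by $a_j$, and conversely any violation of legality produces such a pair. Hence a decomposition is $(n,m)$-bin legal if and only if it is a legal $f$-decomposition, and Proposition~\ref{1.1} follows at once from Theorems 1.2 and 1.3 of \cite{DDKMMV}. I expect the only delicate point to be the boundary bookkeeping in the previous paragraph—keeping the indexing consistent at the junction between the $n$- and $m$-bins and across consecutive $\mathcal{B}_k$—rather than any substantive difficulty, since all the real work is already packaged inside the cited theorems.
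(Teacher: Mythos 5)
Your proposal is correct and takes exactly the paper's route: the paper likewise identifies $(n,m)$-bin legal decompositions with $f$-decompositions for the $f$ in~\eqref{eq:fdef} and then cites Theorems 1.2 and 1.3 of \cite{DDKMMV}, treating the application as immediate. Your careful bookkeeping verifying that the bin rule coincides with the one-sided $f$-rule (and that symmetry of adjacency makes the one-sided condition equivalent to the two-sided one) is precisely the detail the paper leaves implicit, so you have simply written out what the authors took as evident.
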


With this result at hand, we can now formally define an ($n,m$)-bin sequence.
\begin{definition}
For each pair of $n,m\in\mathbb{N}$, an {\em ($n,m$)-bin sequence} is the unique sequence such that every positive integer has a unique $(n,m)$-bin legal decomposition.
\end{definition}
Using this definition one can verify that the $(2,3)$-bin sequence begins:
\[\underbracket{1, 2},\underbracket{ 3, 4, 5}, \underbracket{6, 9}, \underbracket{12, 18, 24}, \underbracket{30, 42}, \underbracket{54, 84, 114}, \underbracket{144, 198}, \underbracket{252, 
396, 540},\underbracket{684, 936}, \underbracket{1188, 
1872, 2556}, \ldots \]
and that the $(2,3)$-bin legal decomposition of 2018 is $2018=1872+144+2$. We also note that we can once again recover the Fibonacci sequence, which in this case is given by the $(1,1)$-bin sequence.

In Section \ref{sec:recurrences} we establish a recurrence for the $(n,m)$-bin sequences.
\begin{theorem}\label{thm:single recurrence} Assume $\{a_x\}_{x=0}^\infty$ is an $(n,m)$-bin sequence. Then for all $n,m\geq 1$ and  $x\geq 2(m+n)$, 
\begin{align}
a_x&=(m+n+1)a_{x-(m+n)}-mna_{x-2(m+n)}.\label{eq:single recurrence}
\end{align}
\end{theorem}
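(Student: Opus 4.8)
The plan is to reduce the statement to a single constant-coefficient recurrence for the \emph{first terms} of the bins and then propagate it across each bin. Throughout, I index the bins linearly as $b_0,b_1,b_2,\dots$, so that $b_t$ has size $s_t=n$ when $t$ is even and $s_t=m$ when $t$ is odd, and I write $F_t$ and $L_t$ for the smallest and largest terms of $b_t$. Because each $\mathcal{B}_k$ consists of two consecutive bins, shifting an index by $m+n$ moves a term to the same position in the bin two steps earlier (same residue mod $m+n$, hence same within-bin offset and bin-type, with bin index decreased by $2$), so it suffices to understand the $b_t$.

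First I would record the greedy description forced by Proposition~\ref{1.1}. Writing $S_j$ for the largest integer admitting a legal decomposition using only $a_0,\dots,a_j$, I claim $a_x=1+S_{x-1}$. Indeed, $a_x-1<a_x$ has a legal decomposition whose summands are all $<a_x$, so $S_{x-1}\ge a_x-1$; and if $S_{x-1}\ge a_x$ then the integer $a_x$ would have both the decomposition $\{a_x\}$ and a decomposition into smaller terms, contradicting uniqueness, whence $S_{x-1}=a_x-1$.

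Next I would compute $S_j$ explicitly. A legal decomposition uses a set of pairwise non-adjacent bins, and since the sequence is increasing, maximizing over all of $b_0,\dots,b_t$ is a maximum-weight independent-set problem on a path with increasing weights $L_0<L_1<\cdots$; a short exchange/DP argument (always include the top bin, since $L_t+\mathrm{OPT}(t-2)\ge L_{t-1}+\mathrm{OPT}(t-2)\ge \mathrm{OPT}(t-1)$) gives
\[
\mathrm{OPT}(t):=L_t+L_{t-2}+L_{t-4}+\cdots .
\]
Combined with the greedy description, the first term of each bin satisfies $F_t=1+\mathrm{OPT}(t-1)$. The key structural point is the within-bin behavior: if $a_j$ is a term of $b_t$ that is not its last, then the optimal use of $a_0,\dots,a_j$ takes $a_j$ together with $\mathrm{OPT}(t-2)$, because the competing option that omits $b_t$ is at most $\mathrm{OPT}(t-1)$ while $a_j\ge F_t=1+\mathrm{OPT}(t-1)>\mathrm{OPT}(t-1)$. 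Hence $S_j=a_j+\mathrm{OPT}(t-2)$, so consecutive terms of $b_t$ differ by the constant $1+\mathrm{OPT}(t-2)=F_{t-1}$; that is, the $r$-th term of $b_t$ equals $F_t+rF_{t-1}$ and $L_t=F_t+(s_t-1)F_{t-1}$.

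Finally I would extract the recurrence. From $F_t=1+\mathrm{OPT}(t-1)=L_{t-1}+\bigl(1+\mathrm{OPT}(t-3)\bigr)=L_{t-1}+F_{t-2}$ together with $L_{t-1}=F_{t-1}+(s_{t-1}-1)F_{t-2}$ I obtain the period-two recurrence $F_t=F_{t-1}+s_{t-1}F_{t-2}$. Unfolding two steps and using that consecutive sizes satisfy $s_{t-1}+s_{t-2}=m+n$ and $s_{t-2}s_{t-3}=mn$ (equivalently, applying Cayley--Hamilton to the two-step transfer matrix, which is constant by the $2$-periodicity of the sizes and has trace $1+m+n$ and determinant $mn$) eliminates the parity and yields
\[
F_t=(1+m+n)F_{t-2}-mn\,F_{t-4}
\]
for every $t$, and the same identity for $F_{t-1}$. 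Since $a_x=F_t+rF_{t-1}$ with $r$ unchanged under $x\mapsto x-(m+n)$, $t\mapsto t-2$, linearity gives $a_x=(m+n+1)a_{x-(m+n)}-mn\,a_{x-2(m+n)}$ for $x\ge 2(m+n)$. I expect the main obstacle to be the combinatorial step identifying $S_j$: proving rigorously that the greedy ``top bin plus every other bin'' is optimal and that the current bin's available maximum always dominates the adjacent earlier bin (the inequality $a_j\ge F_t>\mathrm{OPT}(t-1)$). Once this is in place the remainder is routine algebra, with only minor boundary bookkeeping (for instance the conventions $F_{-1}=1$ and $F_{-2}=0$) needed at small indices.
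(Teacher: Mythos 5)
Your overall architecture is sound and genuinely different from the paper's (the paper derives the one-step relation $a_x=a_{x-1}+a_{x-1-f(x-1)}$ from Theorems 1.2--1.3 of \cite{DDKMMV}, then runs the vector-elimination algorithm of their Theorem 1.5 together with Lemma \ref{lem_same} to collapse the family of relations into \eqref{eq:single recurrence}; you instead extract the explicit structure $a_x=F_t+rF_{t-1}$ and a period-two recurrence for the bin leaders $F_t$). Indeed, your independent-set computation of $\mathrm{OPT}(t)$, the within-bin constant-gap argument, the identity $F_t=F_{t-1}+s_{t-1}F_{t-2}$, the elimination to $F_t=(1+m+n)F_{t-2}-mnF_{t-4}$, and the final propagation by linearity all check out (I verified them against the $(2,3)$-bin sequence, including the boundary conventions $F_{-1}=1$, $F_{-2}=0$). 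However, there is a genuine gap at the very first step, and it is not the one you flagged. Your proof that $S_{x-1}\le a_x-1$ reads: ``if $S_{x-1}\ge a_x$ then the integer $a_x$ would have both the decomposition $\{a_x\}$ and a decomposition into smaller terms.'' This does not follow: $S_{x-1}\ge a_x$ only says that \emph{some} integer $z\ge a_x$ is legally representable by $a_0,\ldots,a_{x-1}$, not that $a_x$ itself is. To conclude that $a_x$ is representable by smaller terms you need the set of integers representable by the prefix $a_0,\ldots,a_{x-1}$ to be downward closed (an interval $[0,S_{x-1}]$), and that interval property is precisely the nontrivial content of the construction in \cite{DDKMMV}; it is not a formal consequence of the statement of Proposition \ref{1.1} by a one-line uniqueness argument. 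Everything downstream ($F_t=1+\mathrm{OPT}(t-1)$, $S_j=a_j+\mathrm{OPT}(t-2)$, the constant gaps) leans on this identification, so the gap infects the whole derivation.

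The gap is fixable, in two ways. The quick fix is to do what the paper's Lemma \ref{rec_reln_a} does: invoke Theorems 1.2 and 1.3 of \cite{DDKMMV}, which give both the uniqueness statement and the relation $a_x=a_{x-1}+a_{x-1-f(x-1)}$; from that relation the interval property, and hence your $a_x=1+S_{x-1}$, follows by an easy induction (any legal subset of $a_0,\ldots,a_{x-1}$ either omits $a_{x-1}$, hence sums to less than $a_{x-1}$, or contains it, hence sums to less than $a_{x-1}+a_{x-1-f(x-1)}=a_x$). The self-contained fix is a simultaneous strong induction using \emph{both} existence and uniqueness: assuming the interval property below $x$, (i) every legal subset of the prefix containing $a_{x-1}$ sums to less than $a_{x-1}+a_{x-1-f(x-1)}$, and (ii) if $a_x<a_{x-1}+a_{x-1-f(x-1)}$ then $a_x$ lies in the full interval of sums realized by prefix subsets, contradicting uniqueness; together these give $a_x=a_{x-1}+a_{x-1-f(x-1)}$ and close the induction. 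With either patch in place, your route is a nice alternative to the paper's: it produces the explicit description of the sequence ($r$-th entry of bin $b_t$ equals $F_t+rF_{t-1}$) as a byproduct, whereas the paper's linear-algebra elimination yields only the recurrence; the cost is that you must re-establish (or cite) the foundational interval property that the paper gets for free from \cite{DDKMMV}. One last cosmetic point: your two-step transfer matrix is not literally constant (its entries swap $n\leftrightarrow m$ with the parity of $t$), but its trace and determinant are parity-independent, which is all the Cayley--Hamilton step needs; your direct elimination avoids the issue entirely.
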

We note that the recurrence above is sometimes a PLR and sometimes it is not. For example, as noted previously, the $(1,1)$-bin legal decompositions are exactly the  Zeckendorf decompositions, and use the Fibonacci numbers, which are defined via a PLR. However, when $n=2$ and $m=1$ the recurrence above is not a PLR and we show this in Appendix \ref{appendix}. This provides further motivation to study sequences that are more broadly defined and do not necessarily fall under (or out of) the PLR definition.

Our main result establishes that the  number of summands used in $(n,m)$-bin legal decompositions of the natural numbers follows a  Gaussian distribution.
\begin{theorem}[Gaussian Behavior of Summands]\label{thm:gaussian}
Let the random variable $Y_k$ denote the number of summands in the (unique) $(n,m)$-bin legal decomposition of an integer chosen uniformly at random from $[0, a_{(n+m)k})$. Normalize $Y_k$ to $Y_k' = (Y_k - \mu_k)/\sigma_k$, where $\mu_k$ and $\sigma_k$ are the mean and variance of $Y_k$ respectively. Then  
\begin{align}\label{muConstantA} \mu_k  \ = \   Ck+O(1), \ \ \ \ \sigma_k^2  \ = \   C'k+O(1),
 \end{align} for some positive constants 
 \[ C=\frac{\alp-1}{\alp},\qquad C'=\frac{(m+n)(1+m+n)-4mn}{\alp^3}.   \]
  Moreover, $Y_k'$ converges in distribution to the standard normal distribution as $k \rightarrow \infty$.
\end{theorem}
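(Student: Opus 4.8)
The plan is to encode the number of summands in a generating polynomial and to extract Gaussian behavior from the dominant eigenvalue of a $2\times 2$ transfer matrix. First I would exploit uniqueness (Proposition \ref{1.1}): every integer in $[0,a_{(n+m)k})$ has a unique legal decomposition using only the terms of the first $k$ super-bins $\mathcal B_0,\dots,\mathcal B_{k-1}$, which are precisely the terms $a_0,\dots,a_{(n+m)k-1}$. Hence if $p_{k,j}$ denotes the number of such integers whose decomposition uses exactly $j$ summands, then $p_{k,j}$ also counts the legal decompositions drawn from those $(n+m)k$ terms that use $j$ summands. Collecting these into the polynomial $P_k(y)=\sum_j p_{k,j}\,y^j$, we have $\mathbb E[y^{Y_k}]=P_k(y)/P_k(1)$ and $P_k(1)=a_{(n+m)k}$.

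Next I would build a transfer matrix by scanning the individual bins (of alternating sizes $n,m,n,m,\dots$) from left to right, recording in the state whether the current bin contributes a summand. Legality (no two summands from the same or adjacent bins) forces: if the previous bin was used then the current bin is idle, while otherwise the current bin is either idle (weight $1$) or contributes one of its $s\in\{n,m\}$ elements (weight $sy$). Composing the $n$-bin step with the following $m$-bin step yields, for one full super-bin, the matrix
\[
M(y)=\begin{pmatrix} 1+ny & 1\\ my & my\end{pmatrix},\qquad
P_k(y)=\begin{pmatrix}1&1\end{pmatrix}M(y)^k\begin{pmatrix}1\\0\end{pmatrix}.
\]
Its characteristic polynomial is $\lambda^2-\bigl(1+(m+n)y\bigr)\lambda+mny^2$, the square root of whose discriminant is exactly $\alpy$; hence the eigenvalues are $\lambda_\pm(y)=\tfrac12\bigl(1+(m+n)y\pm\alpy\bigr)$. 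Setting $y=1$ reproduces the characteristic polynomial $\lambda^2-(m+n+1)\lambda+mn$ of Theorem \ref{thm:single recurrence}, confirming $P_k(1)=a_{(n+m)k}$.

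Since the discriminant $\alp$ is strictly positive for all $n,m\ge 1$, there is a genuine spectral gap $\lambda_+(1)>\lambda_-(1)>0$ that persists for $y$ in a complex neighborhood of $1$. I would therefore diagonalize to write $P_k(y)=c_+(y)\lambda_+(y)^k+c_-(y)\lambda_-(y)^k$ with $c_\pm$ analytic near $y=1$ and $c_+(1)\neq 0$, giving the uniform asymptotic $P_k(y)=c_+(y)\lambda_+(y)^k\bigl(1+o(1)\bigr)$. Writing $y=e^{t}$, the cumulant generating function then satisfies $\log\mathbb E[e^{tY_k}]=k\bigl(\log\lambda_+(e^t)-\log\lambda_+(1)\bigr)+O(1)$ uniformly for $t$ near $0$. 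Differentiating once and twice at $t=0$ produces $\mu_k=Ck+O(1)$ and $\sigma_k^2=C'k+O(1)$ with $C=\lambda_+'(1)/\lambda_+(1)$ and $C'=\tfrac{d^2}{dt^2}\log\lambda_+(e^t)\big|_{t=0}$; a direct computation reduces these to the stated closed forms in terms of $\alp$.

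For the limit law I would analyze the characteristic function of the normalized variable through $\mathbb E[e^{i\tau Y_k'}]=e^{-i\tau\mu_k/\sigma_k}\,\mathbb E[e^{(i\tau/\sigma_k)Y_k}]$. Because each cumulant of $Y_k$ equals $k\,\tfrac{d^r}{dt^r}\log\lambda_+(e^t)\big|_{0}+O(1)$ and $\sigma_k\sim\sqrt{C'k}$, the $r$-th cumulant of $Y_k'$ is $O\bigl(k/k^{r/2}\bigr)\to 0$ for every $r\ge 3$; thus $\log\mathbb E[e^{i\tau Y_k'}]\to-\tau^2/2$ and Lévy's continuity theorem gives convergence to the standard normal. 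I expect the main obstacle to lie in making the first two steps rigorous: correctly encoding the non-adjacency constraint across the alternating $n$- and $m$-bins into $M(y)$ with the exact boundary contribution (so that $P_k(1)=a_{(n+m)k}$ on the nose), and then verifying that $c_+(y)$ is analytic and non-vanishing with $\lambda_+$ strictly dominant on a complex neighborhood of $y=1$, since that perturbation-of-eigenvalues input is precisely what licenses the uniform cumulant expansion. Extracting the explicit constants $C$ and $C'$ afterward is routine but calculation-heavy.
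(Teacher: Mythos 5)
Your proposal is correct, and it reaches the same key intermediate object as the paper by a different road. The paper first proves the recurrence $p_{k,c}=p_{k-1,c}+(m+n)p_{k-1,c-1}-mnp_{k-2,c-2}$ (Proposition \ref{prop:rec for pkc}), packages it into the bivariate generating function $F(x,y)=\bigl(1-x-(m+n)xy+mnx^2y^2\bigr)^{-1}$ (Proposition \ref{prop:Fxy}), and extracts $g_k(y)=q_1(y)\alpha_1(y)^k+q_2(y)\alpha_2(y)^k$ by partial fractions; you instead encode the adjacency constraint directly in a transfer matrix and diagonalize. These agree: your $M(y)$ has characteristic polynomial $\lambda^2-(1+(m+n)y)\lambda+mny^2$, your $P_k(y)$ satisfies $P_k=(1+(m+n)y)P_{k-1}-mny^2P_{k-2}$ with $P_0=1$, $P_1=1+(m+n)y$, hence $P_k=g_k$, and rationalizing shows your $\lambda_+(y)=\tfrac12\bigl(1+(m+n)y+\alpy\bigr)$ is exactly the paper's $\alpha_1(y)$. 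The more substantive divergence is the endgame: the paper quotes Theorem \ref{thm:DDKMV} (Theorem 1.8 of \cite{DDKMV}) as a black box and spends its effort verifying hypotheses (i)--(iii) and computing $C$, $C'$, whereas you re-prove that machinery from scratch via the quasi-power/cumulant argument and L\'evy continuity --- which is essentially the proof of the cited theorem. Your route buys a cleaner, self-contained derivation in which analyticity of $c_\pm(y)$ near $y=1$ comes for free from simple-eigenvalue perturbation; the paper's route buys brevity and explicit formulas for $q_i,\alpha_i$ tailored to the cited theorem's conditions. Two small points you flagged or elided deserve explicit checks, and both are easy: $c_+(1)\neq 0$ follows from $c_+(1)+c_-(1)=P_0(1)=1$ and $c_+(1)\lambda_+(1)+c_-(1)\lambda_-(1)=P_1(1)=\lambda_+(1)+\lambda_-(1)$, giving $c_+(1)=\lambda_+(1)/\alp>0$; and the positivity of $C'$ (needed even to normalize by $\sigma_k$) reduces to $(m+n)(1+m+n)-4mn=(m+n)+(m-n)^2>0$, which is the role played by inequality \eqref{lem:betay} in the paper.
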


As we noted earlier, the $(1,1)$-bin sequence is simply the Fibonacci sequence. In this case, the formulas for the mean and the variance given in \eqref{muConstantA} simplify to the known formulas obtained by Lekkerkerker \cite{Lek} and Kol\v{o}glu et al.~\cite{KKMW}. Lekkerkerker computed that for $x\in[F_n,F_{n+1})$ the average number of summands in a Zeckendorf decomposition is $\frac{n}{\phi^2+1}+O(1)$, where $\phi = \frac{1+\sqrt{5}}{2}$. The result is the same when the interval is extended to  $x\in[0,F_n)$. 
In \cite{KKMW}, the authors show that for $x\in[F_n,F_{n+1})$ the variance of the number of summands in a Zeckendorf decomposition is $\frac{\phi n}{5(\phi+2)}+O(1)$. Again the result is same when the interval is extended to  $x\in[0,F_n)$.

\begin{corollary}\label{cor:m=n=1}
Consider the $(1,1)$-bin sequence. For $x\in [0,a_{2k})$ the average and variance of the number of summands in a $(1,1)$-bin legal decomposition is 
\[ \mu_k = \frac{\sqrt{5}-1}{\sqrt{5}}k+O(1)= \frac{1}{\phi^2+1}2k+O(1)  \hspace{.25in}\hbox{and}\hspace{.25in}
 \sigma^2_k = \frac{2}{5\sqrt{5}}k+O(1)=   \frac{\phi }{5(\phi+2)}2k+O(1). \]
\end{corollary}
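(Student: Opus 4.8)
The plan is to obtain the corollary as a direct specialization of Theorem~\ref{thm:gaussian} to $m=n=1$, followed by the algebra needed to reconcile the radical expressions with the classical $\phi$-forms. As already observed in the text, the $(1,1)$-bin sequence \emph{is} the Fibonacci sequence, so Theorem~\ref{thm:gaussian} applies directly with parameters $m=n=1$ and sampling interval $[0,a_{2k})$.

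First I would evaluate $\alp$ at $m=n=1$, obtaining $\sqrt{(1+1+1)^2-4}=\sqrt5$. Substituting into the constants of Theorem~\ref{thm:gaussian} gives
\[ C=\frac{\sqrt5-1}{\sqrt5},\qquad C'=\frac{(m+n)(1+m+n)-4mn}{(\sqrt5)^3}=\frac{(2)(3)-4}{5\sqrt5}=\frac{2}{5\sqrt5}, \]
so that $\mu_k=Ck+O(1)$ and $\sigma_k^2=C'k+O(1)$ produce the left-hand equalities in each of the two displayed formulas of the corollary.

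Next I would verify the two identities asserting these equal the $\phi$-forms. From $\phi=\tfrac{1+\sqrt5}{2}$ one has $\phi^2+1=\tfrac{5+\sqrt5}{2}$, so $\tfrac{1}{\phi^2+1}\,2k=\tfrac{4}{5+\sqrt5}\,k$, which rationalizes to $\tfrac{\sqrt5-1}{\sqrt5}\,k$; likewise $\tfrac{\phi}{\phi+2}=\tfrac{1+\sqrt5}{5+\sqrt5}=\tfrac{1}{\sqrt5}$, whence $\tfrac{\phi}{5(\phi+2)}\,2k=\tfrac{2}{5\sqrt5}\,k$. These establish the right-hand equalities and, in particular, identify our constants with the quantities of Lekkerkerker~\cite{Lek} and Kol\v{o}glu et al.~\cite{KKMW}.

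Finally, to confirm that the $\phi$-forms genuinely recover those classical counts I would track the index. Since each $\mathcal{B}_k$ contains exactly two terms and the sequence is the Fibonacci sequence under the shift $a_x=F_{x+1}$, the interval $[0,a_{2k})$ equals $[0,F_{2k+1})$; evaluating the known averages and variances for $x\in[0,F_N)$ at $N=2k+1$ matches the displayed expressions once the $O(1)$ terms absorb the additive constant arising from the difference between $2k$ and $2k+1$. I expect no genuine obstacle here: the entire argument is a substitution plus routine radical algebra, and the only point demanding care is the index bookkeeping (the factor of $2$ and the shift $N=2k+1$) needed to align our $k$-indexed statement with the $n$-indexed classical results.
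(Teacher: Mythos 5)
Your proposal is correct and matches the paper's intended argument: the corollary is stated there without explicit proof precisely because it is the direct specialization of Theorem \ref{thm:gaussian} at $m=n=1$ (where $\alp=\sqrt{5}$), together with the routine radical algebra identifying $\frac{\sqrt{5}-1}{\sqrt{5}}$ and $\frac{2}{5\sqrt{5}}$ with the $\phi$-forms of Lekkerkerker and Kol\v{o}glu et al. Your substitutions, the identities $\phi^2+1=\frac{5+\sqrt5}{2}$ and $\frac{\phi}{\phi+2}=\frac{1}{\sqrt5}$, and the index bookkeeping $[0,a_{2k})=[0,F_{2k+1})$ all check out.
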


The paper is organized as follows. Section \ref{sec:recurrences} establishes needed recurrence relations and proves Theorem \ref{thm:single recurrence}, Section \ref{sec:generating} develops helpful generating functions, and Section \ref{sec:gaussian} pulls these ideas together and contains the proof of Theorem \ref{thm:gaussian}. We end with some directions for future research.

\section{Recurrence relations}\label{sec:recurrences}

In this section we establish recurrence relations for $(n,m)$-bin sequences.
We will establish Theorem \ref{thm:single recurrence} via the following two technical results. Lemma \ref{rec_reln_a} provides a family of recurrence relations. For example, equation \eqref{a0_rec} computes the first term in the $n$-bin, equation \eqref{ai_rec} computes the remaining terms in the $n$-bin and the first term in the $m$-bin, and equation \eqref{aj_rec} computes the remaining terms in the $m$-bin. In contrast, Theorem \ref{thm:single recurrence} provides a single recurrence relation that can be used to compute any term regardless of its position in the bins.

\begin{lemma}\label{rec_reln_a}
If $n,m\in\mathbb{N}$, then for $k\geq1$
\begin{align}
        a_{(m+n)(k+1)} & = a_{(m+n)k+m+n-1} +a_{(m+n)k}\label{a0_rec}\\
 a_{(m+n)(k+1)+i} & = a_{(m+n)(k+1)+(i-1)} +a_{(m+n)k+n} \qquad \mbox{for } 1\leq i\leq n\label{ai_rec}\\
    a_{(m+n)(k+1)+j} & = a_{(m+n)(k+1)+j-1} +a_{(m+n)(k+1)}\qquad \mbox{for } n+1\leq j\leq m+n-1\label{aj_rec}
\end{align}
\end{lemma}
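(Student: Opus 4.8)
The plan is to derive all three recurrences from a single structural fact about $(n,m)$-bin sequences: each term is one more than the largest integer that admits a legal decomposition using only the earlier terms. Concretely, for $y\ge 0$ let $L(y)$ denote the largest integer possessing an $(n,m)$-bin legal decomposition all of whose summands lie in $\{a_0,\dots,a_y\}$. The key claim is that
\[ a_x = L(x-1) + 1 \qquad \text{for every } x \ge 1. \]
This is essentially the greedy construction characterizing these $f$-decompositions in \cite{DDKMMV}, of which Proposition \ref{1.1} is a consequence: the decomposition $\{a_x\}$ is the unique legal decomposition of $a_x$, so $a_x$ cannot be represented using only $a_0,\dots,a_{x-1}$; meanwhile every integer in $[0,L(x-1)]$ can be, and so the unique legal decomposition of $L(x-1)+1$ must use a summand of index at least $x$, forcing $L(x-1)+1 \ge a_x$. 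Combining the two inequalities gives the claim.

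Next I would compute $L(x-1)$ by a greedy argument. Because the sequence is increasing, $a_{x-1}$ is the largest term of index $\le x-1$; any legal decomposition avoiding it uses only indices $\le x-2$ and so has value at most $L(x-2) = a_{x-1}-1$, whereas including $a_{x-1}$ already yields a value $\ge a_{x-1}$, so the maximizing decomposition contains $a_{x-1}$. Once $a_{x-1}$ is used, legality forbids its own bin and both adjacent bins, so among indices $\le x-1$ the remaining usable terms are exactly those lying in bins at least two to the left of the bin $B$ containing $a_{x-1}$. Writing $B^-$ for the bin immediately left of $B$ and $B^{--}$ for the next one, the tail is then an unrestricted legal decomposition using indices up to the last index of $B^{--}$, since no adjacency conflict with $a_{x-1}$ can occur across the forbidden bin $B^-$. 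Hence
\[ L(x-1) = a_{x-1} + L(\text{last index of } B^{--}) = a_{x-1} + \left(a_{p} - 1\right), \]
where $p$ is the first index of $B^-$, because (last index of $B^{--}$)$\,+1 = p$ and the claim applies again at index $p$. Adding $1$ gives the uniform identity $a_x = a_{x-1} + a_p$.

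Finally I would specialize this identity to the three positions $a_{x-1}$ can occupy, reading off $p$ from the bin structure. If $a_{x-1}$ is the last term of an $m$-bin, i.e.\ $x = (m+n)(k+1)$, then $B^-$ is the $n$-bin of $\mathcal{B}_k$ and $p=(m+n)k$, giving \eqref{a0_rec}. If $a_{x-1}$ lies in the $n$-bin of $\mathcal{B}_{k+1}$, i.e.\ $x=(m+n)(k+1)+i$ with $1\le i\le n$, then $B^-$ is the $m$-bin of $\mathcal{B}_k$ and $p=(m+n)k+n$, giving \eqref{ai_rec}. If $a_{x-1}$ lies in the $m$-bin of $\mathcal{B}_{k+1}$, i.e.\ $x=(m+n)(k+1)+j$ with $n+1\le j\le m+n-1$, then $B^-$ is the $n$-bin of $\mathcal{B}_{k+1}$ and $p=(m+n)(k+1)$, giving \eqref{aj_rec}. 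The hypothesis $k\ge 1$ is exactly what guarantees that $B^{--}$ exists so the telescoping step is valid.

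The step I expect to be most delicate is the characterization $a_x = L(x-1)+1$, and specifically its completeness half: that every integer up to $L(x-1)$ is legally representable using only indices $\le x-1$ (not merely representable using arbitrary indices, which is all Proposition \ref{1.1} states directly). Handling this cleanly seems to require a simultaneous induction that ties the value recurrence to the representability of every integer in the growing range, drawing on the construction underlying \cite{DDKMMV}. The secondary source of difficulty is purely the bookkeeping of which bin is "two to the left" in each of the three cases, where the division-algorithm indexing and the small-$k$ boundary behavior make off-by-one errors easy.
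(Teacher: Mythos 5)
Your three-case bookkeeping at the end is correct, and it is in fact identical to the paper's entire proof: the paper simply quotes Theorems 1.2 and 1.3 of \cite{DDKMMV}, which deliver the identity $a_x=a_{x-1}+a_{x-1-f(x-1)}$ outright, and then evaluates $f(x-1)$ in the three positions --- exactly your determination of the first index $p$ of the bin immediately to the left of the bin containing $a_{x-1}$. Where you depart from the paper is in trying to re-derive that identity from Proposition \ref{1.1} alone via the function $L$, and that is where the proposal has a genuine gap.

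The gap is the one you flag yourself: the completeness half of $a_x=L(x-1)+1$. Uniqueness (Proposition \ref{1.1}) does yield $L(x-1)+1\ge a_x$, since the unique decomposition of $L(x-1)+1$ must involve some index $\ge x$ and hence has value at least $a_x$. But the reverse inequality $a_x>L(x-1)$ needs every integer in $[0,L(x-1)]$ to be representable using only indices $\le x-1$; without that, supposing $a_x\le L(x-1)$ produces no contradiction, because the set of integers representable by small indices is not yet known to be an interval. Moreover, your greedy evaluation of $L(x-1)$ invokes the same unproven claim at the smaller indices $x-1$ and $p$, so everything hangs on a statement you describe as requiring ``a simultaneous induction'' that you never set up or carry out; as written, the proposal is an outline, not a proof. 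It is completable: induct on $x$ with the hypothesis that the set $S_{x-1}$ of sums of legal decompositions using indices $\le x-1$ equals the interval $[0,a_x-1]$; the bin structure gives $S_{x-1}=S_{x-2}\cup\bigl(a_{x-1}+S_{p-1}\bigr)=[0,a_{x-1}+a_p-1]$, and then your two inequalities identify $a_x=a_{x-1}+a_p$ and restore the interval property at the next step. But the economical fix is the paper's: cite Theorem 1.3 of \cite{DDKMMV} for the identity $a_x=a_{x-1}+a_{x-1-f(x-1)}$, after which your case analysis alone finishes the lemma and the whole $L$-apparatus becomes unnecessary.
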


\begin{proof} By Theorems 1.2 and 1.3 in \cite{DDKMMV}, $a_x=a_{x-1}+a_{x-1-f(x-1)}$. When $x=(m+n)(k+1)$, then $x-1 = (m+n)k+m+n-1$ and $f((m+n)k+m+n-1)=m+n-1$. Hence Equation (\ref{a0_rec}), is immediate.
The other equations follow from a similar argument.
\end{proof}

Lemma \ref{lem_same} interweaves the family of recurrence relations to show that if the single recurrence relation (of Theorem \ref{thm:single recurrence}) is true for $x\equiv 0\,(\mbox{mod }m+n)$, then  it is true for all $x$.

\begin{lemma}\label{lem_same}
Assume $n,m\geq 1$. If 
\begin{equation}\label{lem_rec}
a_x=(m+n+1)a_{x-(m+n)}-mna_{x-2(m+n)}\end{equation}
for $x\geq 2(m+n)$ and $x\equiv 0\,(\mbox{mod }m+n)$, then  Equation (\ref{lem_rec}) is true for all $x\geq 2(m+n)$. 
\end{lemma}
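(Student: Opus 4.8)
The plan is to use the three family recurrences of Lemma~\ref{rec_reln_a} to reduce the single recurrence \eqref{lem_rec} at an arbitrary index to the single recurrence at only two representative positions in each pair of bins $\mathcal{B}_K$: the initial position, and the first position of the $m$-bin. Write an arbitrary index as $(m+n)K+i$ with $0\le i\le m+n-1$, and set $b_K=a_{(m+n)K}$ and $c_K=a_{(m+n)K+n}$, the first terms of the $n$-bin and the $m$-bin of $\mathcal{B}_K$ respectively. It is convenient to record the error $E(x)=a_x-(m+n+1)a_{x-(m+n)}+mn\,a_{x-2(m+n)}$, so that the hypothesis reads $E((m+n)K)=0$ for all $K\ge2$, and the goal is $E(x)=0$ for every $x\ge2(m+n)$.

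First I would iterate \eqref{ai_rec} and \eqref{aj_rec} to obtain closed forms for every term of a pair of bins in terms of the two anchors $b_K$ and $c_{K-1}$, namely $a_{(m+n)K+i}=b_K+i\,c_{K-1}$ for $0\le i\le n$ and $a_{(m+n)K+j}=(1+j-n)b_K+n\,c_{K-1}$ for $n\le j\le m+n-1$. The essential point is that these coefficients depend only on the position within the pair of bins, not on $K$. Substituting them into $E((m+n)K+i)$, whose three terms all sit at the same position of consecutive pairs, the $K$-independence lets the two anchor errors factor out, giving on the $n$-bin \[E((m+n)K+i)=E((m+n)K)+i\,E((m+n)(K-1)+n),\] and the analogous identity with the coefficient $n$ in place of $i$ on the $m$-bin. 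By the hypothesis the first term on the right vanishes, so the whole problem collapses to showing $E((m+n)K+n)=0$; that is, that the subsequence $c_K$ satisfies \eqref{lem_rec}.

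To close the argument I would interweave the recurrences to establish the $c_K$-recurrence on its own. Iterating \eqref{ai_rec} up to the first $m$-bin term gives $c_K=b_K+n\,c_{K-1}$, while \eqref{a0_rec} combined with the closed form for the last term of $\mathcal{B}_{K-1}$ gives $b_K=(m+1)b_{K-1}+n\,c_{K-2}$. Using the first relation to eliminate $b_K$ and $b_{K-1}$ from the second collapses to $c_K=(m+n+1)c_{K-1}-mn\,c_{K-2}$, which is exactly $E((m+n)K+n)=0$. Together with the reduction of the previous paragraph this yields $E(x)=0$ for all $x\ge2(m+n)$, as desired.

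I expect the real difficulty to lie in the bookkeeping for the smallest pairs of bins rather than in the algebra. The closed forms and the relation $b_K=(m+1)b_{K-1}+n\,c_{K-2}$ invoke \eqref{ai_rec}--\eqref{aj_rec} on $\mathcal{B}_{K-1}$ and refer to the anchor $c_{K-2}$, which only makes sense once $\mathcal{B}_{K-2}$ and its $m$-bin exist; in particular, the case $x\in\mathcal{B}_2$ brings in $\mathcal{B}_0$, which has no preceding $m$-bin. These initial pairs must be checked directly, and it is precisely here that the hypothesis earns its place: it furnishes $E((m+n)K)=0$ (including the base value $E(2(m+n))=0$) outright, so that the reduction never has to recover the boundary recurrence from the degenerate first pairs. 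One should also confirm that each family recurrence of Lemma~\ref{rec_reln_a} is valid down to the smallest pair on which it is applied.
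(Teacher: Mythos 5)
Your core reduction is correct and is, at heart, the same mechanism as the paper's proof: both arguments interweave the three recurrences of Lemma \ref{rec_reln_a} and exploit linearity of the error $E(x)=a_x-(m+n+1)a_{x-(m+n)}+mn\,a_{x-2(m+n)}$ to transfer its vanishing from the residue-$0$ subsequence to all other positions. The difference is organizational: the paper propagates residue class by residue class (first to $x\equiv m+n-1$, then ``repeating a similar argument'' for the remaining classes), re-invoking the hypothesis at every step, whereas you compress the bookkeeping into the closed forms $a_{(m+n)K+i}=b_K+i\,c_{K-1}$ and $a_{(m+n)K+j}=(1+j-n)b_K+n\,c_{K-1}$ (with anchors $b_K=a_{(m+n)K}$, $c_K=a_{(m+n)K+n}$) and prove the anchor recurrence $c_K=(m+n+1)c_{K-1}-mn\,c_{K-2}$ \emph{unconditionally} from Lemma \ref{rec_reln_a} alone, so the hypothesis is used exactly once, to kill the $E((m+n)K)$ term. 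I checked the closed forms, the two reduction identities, and the elimination of $b_K$, $b_{K-1}$; they are all correct whenever every invoked instance of Lemma \ref{rec_reln_a} actually exists, i.e.\ for $x\ge 3(m+n)$ (granting the $k=0$ extension of the lemma, which you rightly flag).

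The genuine gap is the boundary case you identify but then dispose of incorrectly: $x=2(m+n)+i$ with $1\le i\le m+n-1$, the nonzero positions of $\mathcal{B}_2$. There your reduction identity is unavailable (it would need the anchor $c_{-1}$ of a nonexistent pair $\mathcal{B}_{-1}$), and your proposed remedy --- that the hypothesis ``furnishes $E(2(m+n))=0$ outright'' --- covers only the position $i=0$, which was never in doubt; it says nothing about $i\ge1$. In fact these cases \emph{cannot} follow from Lemma \ref{rec_reln_a} (with $k\ge1$) plus the hypothesis: the terms $a_1,\ldots,a_{m+n-1}$ appear in none of those relations, so they could be perturbed freely without violating any assumption while changing $E(2(m+n)+i)$. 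Closing the gap requires the explicit initial terms of the sequence, $a_i=i+1$ for $0\le i\le m+n-1$ and $a_{m+n}=m+n+1$, whence $b_1=m+n+1$, $c_0=n+1$, $c_1=(m+n+1)+n(n+1)$, $b_2=(m+1)(m+n+1)+n(n+1)$, and the two identities $b_2-(m+n+1)b_1+mn=0$ and $c_1-(m+n+1)c_0+mn=0$, which make every $E(2(m+n)+i)$ vanish by direct computation. In fairness, the paper's own proof has exactly the same soft spot: its ``similar argument'' at $\mathcal{B}_2$ would need an instance of \eqref{ai_rec} with a negative index, and both proofs use Lemma \ref{rec_reln_a} at $k=0$, outside its stated range. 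So your write-up is no less rigorous than the paper's, but to be a complete proof it must carry out the $\mathcal{B}_2$ verification rather than delegate it to the hypothesis.
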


\begin{proof}
By hypothesis, \[a_{(m+n)k}=(m+n+1)a_{(m+n)k-(m+n)}-mna_{(m+n)k-2(m+n)}.\] In other words, \[a_{(m+n)k}=(m+n+1)a_{(m+n)(k-1)}-mna_{(m+n)(k-2)}.\]
So applying Equation (\ref{a0_rec}), we have 
\begin{align*}
 a_{(m+n)(k-1)+m+n-1} +a_{(m+n)(k-1)}=&(m+n+1)[a_{(m+n)(k-2)+m+n-1} +a_{(m+n)(k-2)}]\\
 &-mn[a_{(m+n)(k-3)+m+n-1} +a_{(m+n)(k-3)}]. 
\end{align*}
Thus 
\begin{align*}
 a_{(m+n)(k-1)+m+n-1} -[(m+n+1)a_{(m+n)(k-2)+m+n-1}-mna_{(m+n)(k-3)+m+n-1}]\\
 = - a_{(m+n)(k-1)}+[(m+n+1)a_{(m+n)(k-2)}-mna_{(m+n)(k-3)}]. 
\end{align*}
By hypothesis, the right hand side of this equation is 0. Hence so is the left side and thus Equation (\ref{lem_rec}) is true for $x\equiv m+n-1\,(\mbox{mod }m+n)$. 

Repeating a similar argument several more times shows that Equation (\ref{lem_rec}) is true for all $x$.
\end{proof}

It remains to prove that Equation \eqref{lem_rec} is true for $x\equiv 0\,(\mbox{mod }m+n)$. We do this in the following proof and thus establish Theorem \ref{thm:single recurrence}.

\begin{proof}[Proof of Theorem \ref{thm:single recurrence}]

Assume $\{a_x\}_{x=0}^\infty$ is an $(n,m)$-bin sequence. As explained in Section 1, this sequence is an $f$-sequence defined by the function $f(j)$ given in Equation \eqref{eq:fdef}. Note that the period of $f(j)$ is $m+n$ and $m+n\geq f(j)+1$ for all $j$.

By Theorem 1.5 in \cite{DDKMMV}, since $f(j)$ is periodic, we know that there is a single recurrence relation for our sequence. The proof of Theorem 1.5 in \cite{DDKMMV} gives us an algorithm for computing the single recurrence relation.

Consider $m+n$ subsequences of $\{a_x\}_{x=0}^\infty$ given by terms  whose indices are all in the same residue class mod $m+n$. We will begin by finding a recurrence relation for each subsequence:
\begin{equation}\label{coeff}
 a_x=\sum_{i=1}^{m+n+1}c_i a_{x-(m+n)i}.   \end{equation} 
A priori, these relations may be different for each residue class, but  Lemma \ref{lem_same} tells us that all relations are  in fact the same. Thus we focus on the subsequence corresponding to the 0 residue class. 

It remains to solve for the constants $c_i$ in \eqref{coeff}. To solve for these constants we will use linear algebra techniques, in particular we use matrices and vectors to represent systems of equations. 
Each of the equations in Lemma \ref{rec_reln_a} can be rewritten as vectors. (The starred columns, beginning with 0, are those that are indexed by multiples of $m+n$, and the columns marked with $\circ$ are indices congruent to $m$ modulo $m+n$): 
\[\begin{array}{rrrrrrrrrrrrrrrr}
&&\star\,&&&&\circ&&&&\star&&&&\circ\,\\
    \vec{v}_0 &=& [1,&-1,    &0, &  &  & & \ldots,&0,&-1,&0,&\ldots]\\
    \vec{v}_1 &=& [0,& 1,    &-1,&0,&  & &\ldots,&0,&-1,&0,&\ldots]\\
    \vdots \\
    \vec{v}_{m-1} &=& [0,&\ldots,& 0,& 1,&-1,&0,&\ldots,&0,&-1,&0,&\ldots]\\
    \vec{v}_{m} &=& [0,&\ldots,&&0,& 1,&-1,&0,&\ldots,&0,&\ldots,&&0,&-1]\\
    \vdots \\
\vec{v}_{m+n-1} &=& [0,&\ldots,& &&&&0,& 1,&-1,&0,&\ldots,&0,&-1]
\end{array}\]
Vector  $\vec{v}_0$ corresponds to the recurrence relation in (\ref{a0_rec}), $\vec{v}_1$ to $\vec{v}_{m-1}$  correspond to the recurrence relations in (\ref{aj_rec}), $\vec{v}_m$ to  $\vec{v}_{m+n-1}$ correspond to the recurrence relations in (\ref{ai_rec}). For all $\vec{v}_j$ the number of leading 0's is $j$ and the number of middle 0's is $f
(m+n-j)-1$. 

Define $T$ to be the transformation that  shifts all coordinates to the right by $(m+n)$ places.

According to the algorithm in \cite{DDKMMV} the goal is to zero out the coordinates that are not indexed by multiples of $m+n$ (the period). Note the first column is indexed by 0. Our first step in this process is to define $\vec{w}_1$, a linear combination of the $\vec{v}_j$. We have:
\[ \vec{w}_1 =\vec{v}_0+\cdots+\vec{v}_{m+n-1}=[1,\; 0,\; \ldots,\; 0,\; -m-1,\;0 ,\; \ldots,\; 0,\;-n, \;0], \]
where there are $(m+n-1)$ 0's in the first set and $(m-1)$ 0's in the second set. We continue and use $T$ to define $\vec{w}_2$:
\begin{align*}
 \vec{w}_2 &=\vec{w}_1+n\sum_{j=m}^{m+n-1}T\vec{v}_j\\
 &=[1,\; 0,\; \ldots,\; 0,\; -m-1,\;0 ,\; \ldots,\; 0, \;-n,\;0 ,\; \ldots,\; 0,\;-n^2 ], \end{align*}
where there are $(m+n-1)$ 0's in the first and second sets and $(m-1)$ 0's in the last set.

Note that in $\vec{w}_0=\vec{v}_0$, $\vec{w}_1$, and $\vec{w}_2$ the bad coordinates (the coordinates that are not 0 and not indexed by multiples of $(m+n)$) are given by 
\[\begin{array}{rrlrrrrr}
    \vec{u}_0 &= &[-1,&0 & \ldots, &0]\\
    \vec{u}_1 &= &[0, & \ldots,&0, &-n]\\
    \vec{u}_2 &= &[0, & \ldots,&0, &-n^2]
\end{array}.\]
We simplify by removing the common strings of 0's:
\[\begin{array}{rrlrrrr}
    \vec{u}_0 &= &[-1, &0]\\
    \vec{u}_1 &= &[0, &-n]\\
    \vec{u}_2 &= &[0, &-n^2]
\end{array}.\]
There exists a non-trivial solution to $\sum_{j=0}^2\lambda_j\vec{u}_j=0$, namely $\lambda_0 =0,\lambda_1 =-n, \lambda_2 =1$. Using these values, we can write a linear combination of the $\vec{w}_j$ in which we have succeeded in zeroing out the coordinates that are not multiples of $m+n$:
\[ \sum_{j=0}^2  \lambda_jT^{2-j}\vec{w}_j = [1,\, 0,\, \ldots,\, 0,\, -(m+n+1),\,0 ,\, \ldots,\, 0, \,mn,\,0 ,\, \ldots ].  \]
Thus Relation (\ref{coeff}) becomes $a_x = (m+n+1)a_{x-(m+n)}-mna_{x-2(m+n)}$. Note that a priori this is only the recurrence relation for the subsequence given by the terms whose indices are congruent to $0\, (\mbox{mod } m+n)$. Fortunately, applying Lemma \ref{lem_same}, we see that this recurrence relation is the single relation for the entire sequence.
\end{proof}

\section{Counting summands with generating functions}\label{sec:generating}

In this section we  provide generating functions for counting integers with a fixed number of  summands in their $(n,m)$-bin legal decomposition. We continue to assume throughout that  $\{a_x\}_{x=0}^\infty$ is an $(n,m)$-bin sequence.

Let $p_{k,c}$ denote the number of integer $z\in [0,a_{(n+m)k})$ whose legal decomposition contains exactly $c$ summands, where $c\geq 0$. Then by definition

\begin{equation}
p_{0,c}=\begin{cases}
   1   &c=0 \\
    0  & c>0
\end{cases}
\end{equation}
\begin{equation}
p_{1,c}=\begin{cases}
   1   &c=0 \\
    n+m  & c=1\\
    0 & c>1
\end{cases}
\end{equation}
for all $k\geq 0$, $p_{k,0}=1$. Also, for all $k\geq 0$,  $p_{k,1}=k(n+m)$. Moreover, for all $c>k\geq 0$, $p_{k,c}=0$. 

We also have the following recurrence relation for the values of $p_{k,c}$.
\begin{proposition}\label{prop:rec for pkc}
If $k\geq 2$ and $c\geq 0$, then 
\begin{equation}
p_{k,c}= p_{k-1,c} +(m+n)p_{k-1,c-1} -nmp_{k-2,c-2}.
\end{equation}
\end{proposition}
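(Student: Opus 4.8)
The plan is to establish the recurrence by a combinatorial decomposition of the integers in $[0,a_{(n+m)k})$ according to which bins their legal decompositions use, mirroring the structure of the single recurrence in Theorem \ref{thm:single recurrence}. Since $p_{k,c}$ counts integers in $[0,a_{(n+m)k})$ with exactly $c$ summands, I would classify each such integer $z$ by whether its legal decomposition uses any summand from the "last" pair of bins $\mathcal{B}_{k-1}$ (the bins indexed by positions in $[(n+m)(k-1),(n+m)k)$). The key point is that legality forbids using summands in adjacent bins, so the contribution of the top block and the contribution of the lower-indexed blocks interact only through the adjacency constraint at the boundary between $\mathcal{B}_{k-1}$ and $\mathcal{B}_{k-2}$.

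First I would handle the integers whose decomposition uses no summand from $\mathcal{B}_{k-1}$: these are exactly the legal decompositions of integers in $[0,a_{(n+m)(k-1)})$ with $c$ summands, contributing $p_{k-1,c}$. Next I would count those that do use a summand from $\mathcal{B}_{k-1}$. Here I expect to split further according to whether the summand lies in the $n$-bin or the $m$-bin of $\mathcal{B}_{k-1}$, and to use the recurrence coefficients $m+n$ and $nm$ appearing in Theorem \ref{thm:single recurrence} to count the number of available "slots" (there are $m+n$ terms across the two bins). Using exactly one such summand removes one summand from the count, which is the source of the index shift $c\mapsto c-1$ and the factor $(m+n)p_{k-1,c-1}$. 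The subtracted term $-nmp_{k-2,c-2}$ should arise as an inclusion–exclusion correction: the naive product overcounts the configurations where the adjacency restriction between $\mathcal{B}_{k-1}$ and $\mathcal{B}_{k-2}$ is violated, and the factor $nm$ (the number of ways to choose one term from the $n$-bin and one from the $m$-bin) together with the shift to $p_{k-2,c-2}$ accounts precisely for these forbidden overlaps.

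An alternative and likely cleaner route is to bypass the direct combinatorial bookkeeping entirely and instead derive the recurrence from the single recurrence $a_x=(m+n+1)a_{x-(m+n)}-mna_{x-2(m+n)}$ of Theorem \ref{thm:single recurrence}. The idea is that $p_{k,c}$ can be organized into a two-variable generating function, and the linear recurrence satisfied by the $a_x$ should induce a matching linear recurrence on the counting quantities $p_{k,c}$, with the variable $c$ tracking summand count. In this approach the coefficients $(m+n+1)$, $-mn$ of the term recurrence translate into the coefficients governing how intervals of the form $[0,a_{(n+m)k})$ nest, and the bookkeeping of summand counts produces the shifts in $c$. This is the step I expect to be the main obstacle: one must verify carefully that the "$+1$" in the coefficient $m+n+1$ splits correctly into the $p_{k-1,c}$ term (summands staying the same) plus the $(m+n)$ contribution (one new top summand), and that the $-mn$ term lands at the correct double shift $p_{k-2,c-2}$. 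Once this correspondence is pinned down, the base cases $p_{0,c}$, $p_{1,c}$, $p_{k,0}=1$, $p_{k,1}=k(n+m)$, and $p_{k,c}=0$ for $c>k$ confirm consistency, and the recurrence follows for all $k\geq 2$, $c\geq 0$.
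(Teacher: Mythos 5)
Your first, combinatorial route is essentially the paper's own proof: condition on whether the legal decomposition uses a summand from $\mathcal{B}_{k-1}$, obtain $(m+n)p_{k-1,c-1}$ from the choices of that largest summand, and subtract $nmp_{k-2,c-2}$ for the illegal configurations that pair a summand in the $n$-sub-bin of $\mathcal{B}_{k-1}$ with one in the $m$-sub-bin of $\mathcal{B}_{k-2}$, exactly as the paper does. The alternative generating-function route you sketch (deriving this from Theorem \ref{thm:single recurrence}) is not needed and is not how the paper proceeds; the direct combinatorial argument suffices.
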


\begin{proof}
The decomposition of an integer $z\in [0,a_{(n+m)k})$ either has a summand from the bin $\mathcal{B}_{k-1}$ or it doesn't. If it doesn't then the number of integers with $c$ summands is $p_{k-1,c}$. 

If $z$ has a summand in the bin $\mathcal{B}_{k-1}$, then there are two possibilities: either the summand lies in the bin  of size $m$ or in the bin of size $n$. In what follows we need to recall that the first sub-bin of $\mathcal{B}_{k-1}$ has size $n$ and the second has size $m$.  If the largest summand appearing in the decomposition of $z$ is in the sub-bin of size $m$ then there are $m$ ways to choose it, and since the next largest legal summand is less than $a_{(n+m)(k-1)}$, there are $p_{k-1,c-1}$ ways to choose the remaining $c-1$ summands. Hence there are $mp_{k-1,c-1}$ integers with $c$ summands and largest summand from the $m$ sub-bin of $\mathcal{B}_{k-1}$. On the other hand, if the largest summand in the decomposition of $z$ is in the sub-bin of size $n$, the quantity $np_{k-1,c-1}$ overcounts by $nmp_{k-2,c-2}$, because a decomposition with a summand from the sub-bin of size $n$ of $\mathcal{B}_{k-1}$ and a summand from the sub-bin of size $m$ of $\mathcal{B}_{k-2}$ does not give rise to a $(n,m)$-bin legal decomposition.
Hence $p_{k,c}= p_{k-1,c} +(m+n)p_{k-1,c-1}-nmp_{k-2,c-2}$.
\end{proof}

\begin{proposition}\label{prop:Fxy}
Let $F(x,y)=\sum_{k\geq 0}\sum_{c\geq 0}p_{k,c}x^ky^c$ be the generating function of the $p_{k,c}$'s arising from the $(n,m)$-bin legal decompositions. Then
\begin{align}
    F(x,y)&=\frac{1}{1-x-(m+n)xy+mnx^2y^2}.\label{eq:Fxy}
\end{align}
\end{proposition}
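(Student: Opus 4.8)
The plan is to derive the closed form of $F(x,y)$ directly from the recurrence in Proposition~\ref{prop:rec for pkc} together with the initial data computed just before it. First I would multiply the recurrence
\[
p_{k,c}=p_{k-1,c}+(m+n)p_{k-1,c-1}-nm\,p_{k-2,c-2}
\]
by $x^k y^c$ and sum over all $k\geq 2$ and $c\geq 0$, with the convention that $p_{k,c}=0$ whenever any index is negative. Each term on the right becomes a shifted copy of $F(x,y)$: the sum $\sum_{k\geq 2,\,c\geq 0}p_{k-1,c}x^k y^c$ contributes $x$ times the generating function over $k\geq 1$, the term $(m+n)p_{k-1,c-1}$ contributes $(m+n)xy$ times the generating function over $k\geq 1$, and $-nm\,p_{k-2,c-2}$ contributes $-nmx^2y^2$ times the full $F(x,y)$. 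The only subtlety is bookkeeping the low-order terms ($k=0$ and $k=1$) that the recurrence does not cover, which is exactly why I would record the explicit values $p_{0,c}$ and $p_{1,c}$, and the facts $p_{k,0}=1$, $p_{k,1}=k(n+m)$, and $p_{k,c}=0$ for $c>k$.

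Concretely, I would write $F(x,y)=S_0+S_1+\sum_{k\geq 2}\sum_{c\geq 0}p_{k,c}x^ky^c$, where $S_0=\sum_{c}p_{0,c}y^c=1$ and $S_1=x\sum_{c}p_{1,c}y^c=x(1+(m+n)y)$ are the $k=0,1$ slices. Substituting the recurrence into the tail and re-indexing turns the equation into
\[
F(x,y)=1+x(1+(m+n)y)+\bigl(x+(m+n)xy-nmx^2y^2\bigr)F(x,y)+(\text{correction terms}),
\]
where the correction terms account for the mismatch between the shifted series and $F(x,y)$ at small $k$. The key algebraic step is checking that these correction terms, assembled from the known initial values $p_{0,c}$ and $p_{1,c}$, cancel the explicit $1+x(1+(m+n)y)$ contribution precisely, leaving
\[
\bigl(1-x-(m+n)xy+nmx^2y^2\bigr)F(x,y)=1.
\]
Solving for $F(x,y)$ then gives \eqref{eq:Fxy}.

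The main obstacle I anticipate is purely the edge-term bookkeeping: when I shift indices in $\sum_{k\geq2}p_{k-1,c}x^ky^c$ to reindex as $\sum_{k\geq1}p_{k,c}x^{k+1}y^c$, I must carefully add and subtract the $k=0$ (and, for the doubly-shifted term, $k=0,1$) contributions so that each sum genuinely equals $x$, $(m+n)xy$, or $-nmx^2y^2$ times the complete $F(x,y)$ rather than a truncated version. Getting every boundary slice to cancel against the explicit $S_0+S_1$ is where an error is most likely to creep in, so I would verify the cancellation by directly comparing coefficients of $x^0$, $x^1y^0$, and $x^1y^1$ on both sides against the known values $p_{0,0}=1$, $p_{1,0}=1$, and $p_{1,1}=n+m$. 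Once the denominator is confirmed, no further work is needed since the geometric-type inversion is immediate.
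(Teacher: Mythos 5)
Your proposal is correct and takes essentially the same route as the paper: the paper's proof also multiplies the recurrence from Proposition~\ref{prop:rec for pkc} by $x^k y^c$, sums using the initial values and the convention that $p_{k,c}$ vanishes for negative indices, and arrives at the functional equation $F(x,y)=xF(x,y)+(m+n)xyF(x,y)-mnx^2y^2F(x,y)+1$, from which \eqref{eq:Fxy} follows. The boundary cancellation you flag as the main risk does work out exactly as you anticipate (the $k=0,1$ slices cancel against the shifted-series corrections), which is what the paper compresses into the phrase ``after some straightforward algebra.''
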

\begin{proof}
Noting that $p_{k,c}=0$ if either $k < 0$ or $c < 0$, using explicit values of $p_{k,c}$ and the
recurrence relation from Proposition \ref{prop:rec for pkc}, after some straightforward algebra we obtain
\[F(x,y)=xF(x,y)+(m+n)xyF(x,y)-mnx^2y^2F(x,y)+1\]
from which \eqref{eq:Fxy} follows.
\end{proof}

\section{Gaussian behavior}\label{sec:gaussian}
To motivate the main result of this section, we point the reader to the following experimental observations. Taking samples of 100,000 integers from the intervals $[0, a_{10000(m+n)})$, in Figure \ref{gaussiangraphs} we provide a histogram for the distribution of the number of summands in the $(n,m)$-bin decomposition of these integers, when $(n,m)=(1,2)$, $(n,m)=(2,1)$, $(n,m)=(2,3)$, and $(n,m)=(3,2)$ respectively. In these figures we also provide the Gaussian curve computed using each sample's mean and variance. Furthermore, Table \ref{table:gaussian} gives the values of the predicted means and variances as computed using Theorem \ref{thm:gaussian}, as well as the sample means and variances, for each of the samples considered.

\begin{figure}[h]
  \centering
  \subfloat[$(n,m)=(1,2)$]{\label{gaussian(n,m)=(1,2)}\includegraphics[width=6.0 cm]{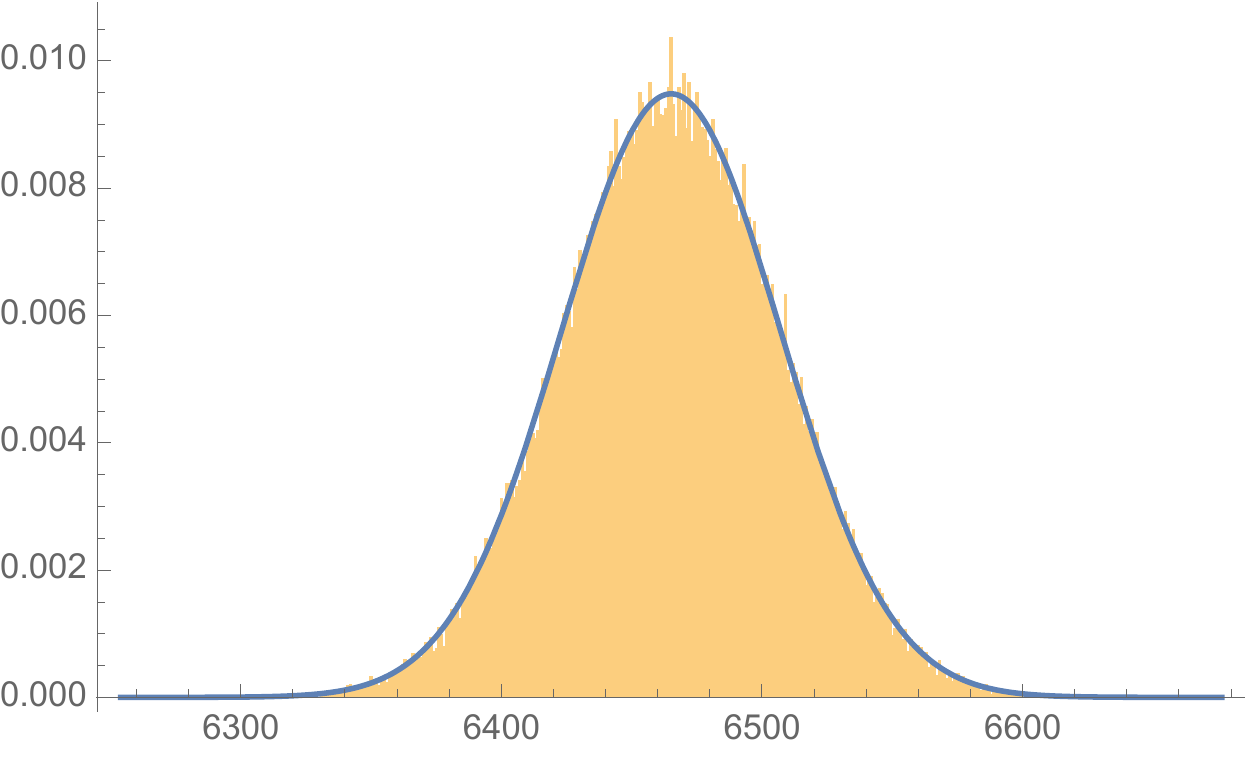}}\hspace{15 mm}
  \subfloat[$(n,m)=(2,1)$]{\label{gaussian(n,m)=(2,1)}\includegraphics[width=6.0 cm]{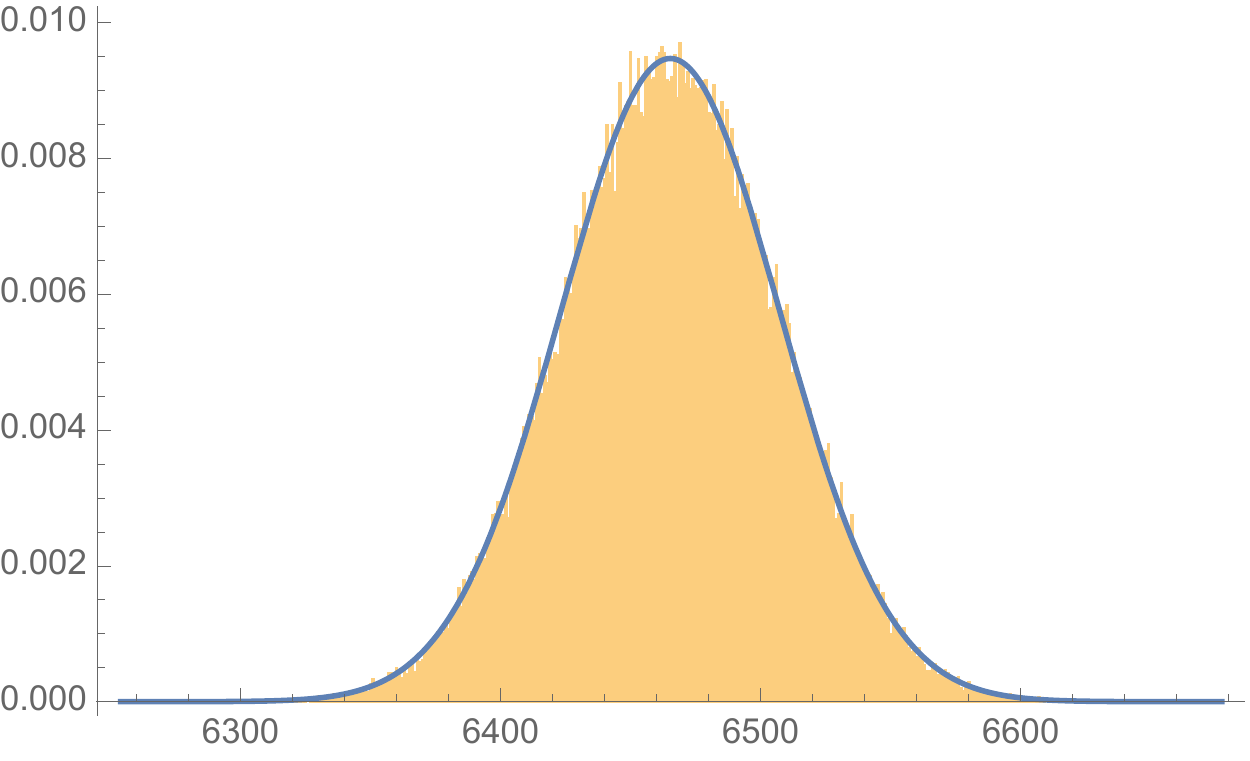}}\\
  \subfloat[$(n,m)=(2,3)$]{\label{gaussian(n,m)=(2,3)}\includegraphics[width=6.0 cm]{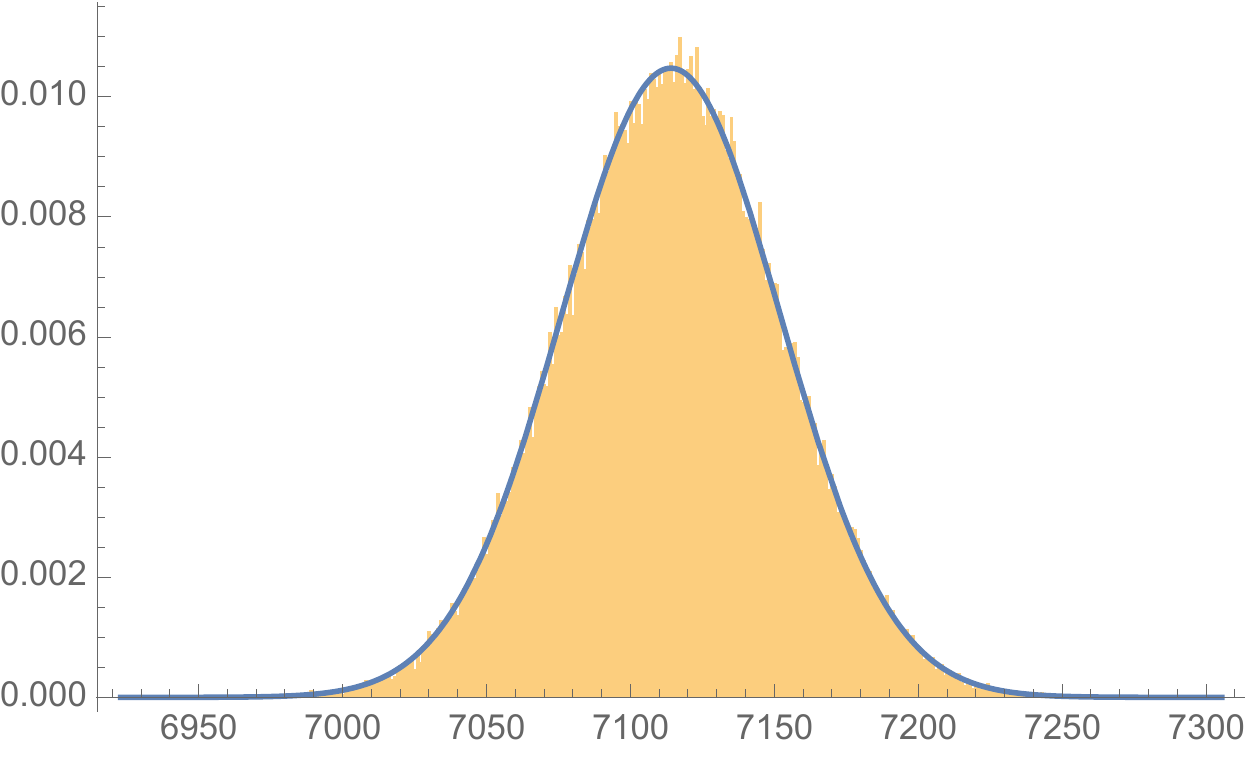}}\hspace{15 mm}
  \subfloat[$(n,m)=(3,2)$]{\label{gaussian(n,m)=(3,2)}\includegraphics[width=6.0 cm]{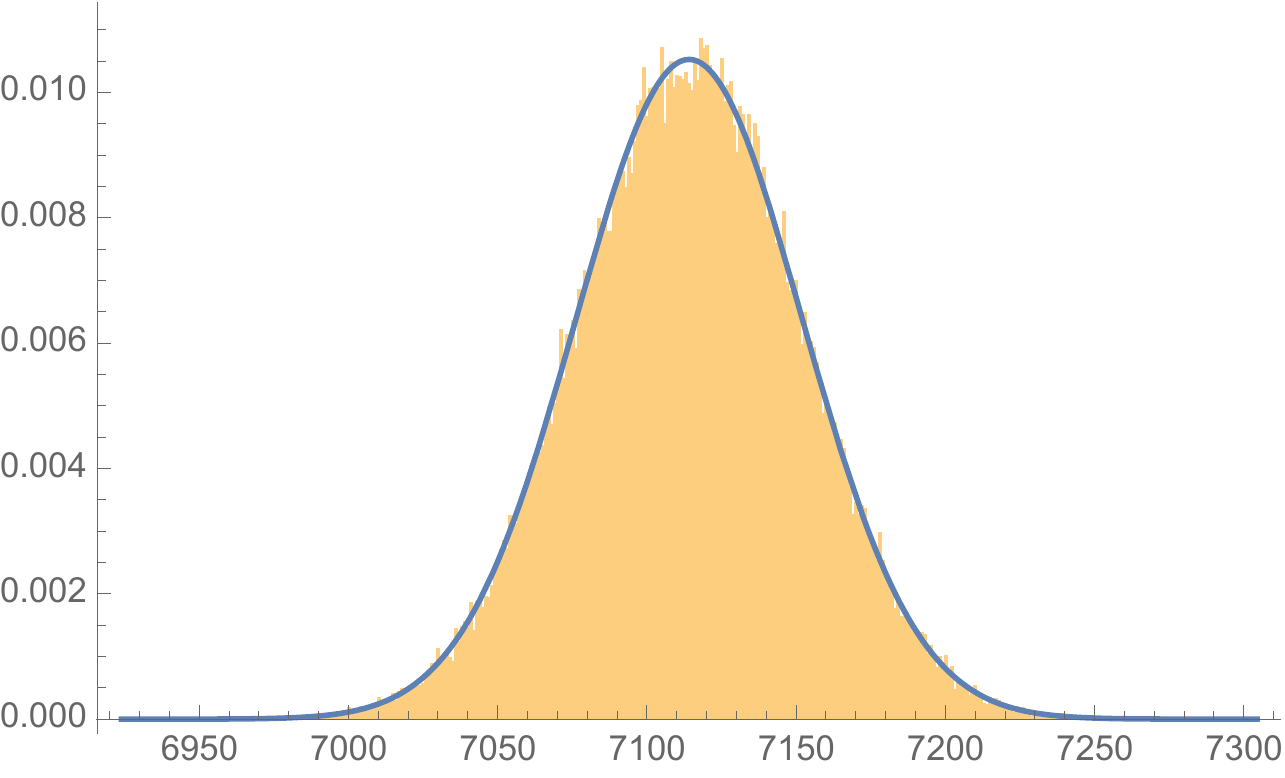}}
  \caption{Distributions for the number of summands in the $(n,m)$-bin decomposition for a random sample of 100,000 integers from the intervals $[0, a_{10000(m+n)})$.}
  \label{gaussiangraphs}
\end{figure}

\begin{table}[h]
\centering
\begin{tabular}{|c|c|c|c|c|c|}
\hline
Figure&$(n,m)$&Predicted Mean&Sample Mean&Predicted Variance&Sample Variance\\\hline\hline
\ref{gaussian(n,m)=(1,2)}&$(1,2)$&$6464.466094$& $6465.205230$& $1767.766953$& $1770.751318$\\\hline
\ref{gaussian(n,m)=(2,1)}&$(2,1)$&$6464.466094$&$6465.418910$&\ $1767.766953$&\  $1774.385128$\\\hline
\ref{gaussian(n,m)=(2,3)}&$(2,3)$&$7113.248654$&$7114.140920$&\ $1443.375673$&\ $1450.656668$\\\hline
\ref{gaussian(n,m)=(3,2)}&$(3,2)$&$7113.248654$&$7114.202700$&\ $1443.375673$&\ $1437.312966$\\\hline
\end{tabular}
\caption{Predicted means and variances versus sample means and variances for simulations from Figure  \ref{gaussiangraphs}.}
\label{table:gaussian}
\end{table}

From these observations one might speculate that for any pair of integers $n,m\in\mathbb{N}$ the distribution of the number of summands in the $(n,m)$-bin legal decompositions of integers in the interval $[0,a_{(n+m)k})$ displays Gaussian behavior. This is in fact the statement of Theorem \ref{thm:gaussian}.

To prove Theorem \ref{thm:gaussian} we first need the following technical results.

\begin{lemma}
For all $m,n,y>0$, the following inequalities hold:
\begin{align}
    (1+(m+n)y)^2-4mny^2&>1+(m+n)y\label{lem:betay}\\
         \alpy &>1\label{lem:discr}\\
    1+(m+n)y+\alpy &>1+(m+n)y-\alpy >  0\label{lem:denom}
\end{align}
\end{lemma}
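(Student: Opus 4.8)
The plan is to prove each of the three inequalities in turn, since \eqref{lem:denom} depends on \eqref{lem:discr}, and \eqref{lem:discr} follows readily from \eqref{lem:betay}. Throughout I would keep in mind that $m,n,y>0$, so every quantity like $(m+n)y$, $mny^2$, and $y$ itself is strictly positive; this positivity is what drives all three bounds.

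For \eqref{lem:betay}, I would set $s=(m+n)y>0$ and $p=4mny^2>0$ and reduce the claim to $(1+s)^2-p>1+s$, i.e. $(1+s)^2-(1+s)>p$, i.e. $(1+s)s>p$. Expanding, $(1+s)s=s+s^2=(m+n)y+(m+n)^2y^2$, while $p=4mny^2$. So the inequality becomes $(m+n)y+(m+n)^2y^2>4mny^2$, and since $(m+n)y>0$ it suffices to show $(m+n)^2y^2\ge 4mny^2$, which is just $(m+n)^2\ge 4mn$, i.e. $(m-n)^2\ge 0$. Adding back the strictly positive term $(m+n)y$ makes the inequality strict. This is the cleanest route and I expect no obstacle here.

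For \eqref{lem:discr}, note that \eqref{lem:betay} already gives $(1+(m+n)y)^2-4mny^2>1+(m+n)y>1$ (the last step using $(m+n)y>0$). Hence the radicand exceeds $1$, and taking square roots of both sides of (radicand) $>1$ yields $\alpy>1$, using that $t\mapsto\sqrt{t}$ is increasing and $\sqrt{1}=1$. The only thing to check is that the radicand is nonnegative so the square root is real, but since it already exceeds $1$ this is automatic.

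For \eqref{lem:denom}, the left inequality $1+(m+n)y+\alpy>1+(m+n)y-\alpy$ is equivalent to $2\,\alpy>0$, which holds by \eqref{lem:discr}. For the right inequality $1+(m+n)y-\alpy>0$, I would rearrange to $1+(m+n)y>\alpy$; since both sides are positive (the left trivially, the right by \eqref{lem:discr}), this is equivalent after squaring to $(1+(m+n)y)^2>(1+(m+n)y)^2-4mny^2$, i.e. $4mny^2>0$, which holds. The main (and only mild) obstacle is justifying the squaring step: I must confirm both sides are positive before squaring, which they are, so the equivalence is valid and the chain of implications is reversible. Assembling these three arguments completes the proof.
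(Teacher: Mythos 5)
Your proof is correct and follows essentially the same route as the paper's: both arguments hinge on the identity $(m+n)^2-4mn=(m-n)^2\ge 0$ (the paper writes the radicand as $1+2(m+n)y+(m-n)^2y^2$, you isolate $(m+n)^2\ge 4mn$ after subtracting $1+s$), deduce \eqref{lem:discr} by monotonicity of the square root, and obtain the second inequality of \eqref{lem:denom} by comparing $(1+(m+n)y)^2$ with the radicand and using positivity of both sides. No gaps; your explicit justification of the squaring step is a point the paper leaves implicit.
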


\begin{proof}
To establish \eqref{lem:betay} and \eqref{lem:discr} we note that \begin{align*}
(1+(m+n)y)^2-4mny^2=&1+2(m+n)y+(m-n)^2y^2>1+(m+n)y>1.
\end{align*}

The first inequality in \eqref{lem:denom} is clear, while the second is true because 
\begin{align*}(1+(m+n)y)^2>&(1+(m+n)y)^2-4mny^2>1.\end{align*} Hence $1+(m+n)y>\alpy$.
\end{proof}

\begin{proposition} Let $g_k(y):=\sum_{c=0}^{k}p_{k,c}y^c$ denote the coefficient of $x^k$ in $F(x,y)$. Then
\begin{align*}
g_k(y)&=\frac{1}{\sqrt{(1+(m+n)y)^2-4mny^2}}\left[\left(\frac{2mny^2}{(1+(m+n)y)
-\sqrt{(1+(m+n)y)^2-4mny^2}}\right)^{k+1}\right.\\
&\hspace{5mm}\left.-\left(\frac{2mny^2}{(1+(m+n)y)+\sqrt{(1+(m+n)y)^2-4mny^2}}\right)^{k+1}\right].
\end{align*}
\end{proposition}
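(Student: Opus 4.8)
The plan is to treat $F(x,y)$, for a fixed $y>0$, as a rational generating function in the single variable $x$ and to extract the coefficient of $x^k$ by the standard partial-fraction method. Writing
\[
\beta := 1+(m+n)y, \qquad \gamma := mny^2,
\]
Proposition \ref{prop:Fxy} gives $F(x,y)=\dfrac{1}{1-\beta x+\gamma x^2}$, so the $g_k(y)$ satisfy the second-order linear recurrence $g_k=\beta g_{k-1}-\gamma g_{k-2}$ for $k\geq 2$, with initial data $g_0=1$ and $g_1=1+(m+n)y=\beta$ read off from the explicit values of $p_{0,c}$ and $p_{1,c}$. (This is exactly the generating-function form of the recurrence in Proposition \ref{prop:rec for pkc}.) Everything then reduces to solving this recurrence in closed form.

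First I would factor the denominator as $1-\beta x+\gamma x^2=(1-r_+x)(1-r_-x)$, where $r_\pm$ are the roots of $t^2-\beta t+\gamma$, namely
\[
r_\pm=\frac{\beta\pm\alpy}{2},\qquad r_++r_-=\beta,\qquad r_+r_-=\gamma=mny^2 .
\]
Here the inequalities of the preceding lemma guarantee that this factorization is well-behaved for all $y>0$: by \eqref{lem:discr} the discriminant $\alpy$ is real and strictly larger than $1$, so the two roots are distinct (making the partial-fraction step valid), and by \eqref{lem:denom} the quantities $\beta\pm\alpy$ are both strictly positive, so no denominator appearing below can vanish.

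Next, a partial-fraction decomposition gives
\[
\frac{1}{(1-r_+x)(1-r_-x)}=\frac{1}{r_+-r_-}\left(\frac{r_+}{1-r_+x}-\frac{r_-}{1-r_-x}\right),
\]
and expanding each geometric series and collecting the coefficient of $x^k$ yields the Binet-type formula
\[
g_k(y)=\frac{r_+^{\,k+1}-r_-^{\,k+1}}{r_+-r_-}=\frac{r_+^{\,k+1}-r_-^{\,k+1}}{\alpy},
\]
since $r_+-r_-=\alpy$. The last step is purely cosmetic: using $r_+r_-=mny^2$ together with $\beta^2-(\alpy)^2=4mny^2$, I would rationalize each root by multiplying through by the conjugate, obtaining
\[
r_+=\frac{\beta+\alpy}{2}=\frac{2mny^2}{\beta-\alpy},\qquad
r_-=\frac{\beta-\alpy}{2}=\frac{2mny^2}{\beta+\alpy}.
\]
Substituting these into the Binet formula produces exactly the claimed closed form for $g_k(y)$.

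The computation is routine once the recurrence is identified; the genuine points requiring care are all bookkeeping. I expect the only (minor) obstacle to be matching the stated rationalized form with the correct pairing of signs and exponents — in particular confirming that the root $\tfrac{\beta+\alpy}{2}$ corresponds to the summand with denominator $\beta-\alpy$, and not the reverse — and invoking the lemma at the right places to ensure the roots are distinct and the denominators are nonvanishing for every $y>0$.
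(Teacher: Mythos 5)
Your proposal is correct and follows essentially the same route as the paper: both treat $F(x,y)$ as a rational function of $x$ with quadratic denominator, apply partial fractions and geometric series, and read off the coefficient of $x^k$, rationalizing at the end to reach the stated form. The only difference is cosmetic — you factor the denominator as $(1-r_+x)(1-r_-x)$ and work with the reciprocal roots $r_\pm$ (giving the Binet formula directly), while the paper divides by $mny^2$ and works with the poles $\lambda_i = 1/r_\mp$ of $F$.
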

\begin{proof}
From Proposition \ref{prop:Fxy} we know that 
\[F(x,y)=\frac{1}{1-x-(m+n)xy+mnx^2y^2}=\frac{1}{mny^2}\cdot\frac{1}{x^2-\frac{1+(m+n)y}{mny^2}+\frac{1}{mny^2}}.\]
In order to expand $F(x,y)$ into a power series we will use  partial fraction decomposition, but first we must factor $x^2-\frac{1+(m+n)y}{mny^2}+\frac{1}{mny^2}$ into two linear factors. Using the quadratic formula yields
\[x^2-\frac{1+(m+n)y}{mny^2}+\frac{1}{mny^2}=(x-\lambda_1)(x-\lambda_2)\]
where 
\begin{align}
\lambda_1=\lambda_1(y)&=\frac{(1+(m+n)y)-\sqrt{(1+(m+n)y)^2-4mny^2}}{2mny^2}\\
\lambda_2=\lambda_2(y)&=\frac{(1+(m+n)y)+\sqrt{(1+(m+n)y)^2-4mny^2}}{2mny^2}.
\end{align}

Since the discriminant is positive, by Equation \eqref{lem:discr}, we can use 
 partial fraction decomposition
\[F(x,y)=\frac{1}{mny^2}\cdot\frac{1}{x^2-\frac{1+(m+n)y}{mny^2}+\frac{1}{mny^2}}=\frac{1}{mny^2}\cdot\left(\frac{A_1}{x-\lambda_1}+\frac{A_2}{x-\lambda_2}\right).\]

Solving for $A_1, A_2$:
\[1=A_1(x-\lambda_2)+A_2(x-\lambda_1)\]
If $x=\lambda_1$, then $1=A_1(\lambda_1-\lambda_2)$. Hence   $A_1 =\frac{1}{\lambda_1-\lambda_2}$
and 
\begin{align*}
\lambda_1-\lambda_2=&\left( \frac{(1+(m+n)y)-\sqrt{(1+(m+n)y)^2-4mny^2}}{2mny^2}\right)\\
&-\left( \frac{(1+(m+n)y)+\sqrt{(1+(m+n)y)^2-4mny^2}}{2mny^2}\right)\\
=& -\frac{\sqrt{(1+(m+n)y)^2-4mny^2}}{mny^2}
\end{align*}
Thus $ A_1= \frac{-mny^2}{\sqrt{(1+(m+n)y)^2-4mny^2}}$.
Similarly, if $x=\lambda_2$, then $1=A_2(\lambda_1-\lambda_1)$. So   $A_2 =\frac{1}{\lambda_2-\lambda_1}=-A_1.$

Thus
\begin{align}
F(x,y)=&\frac{1}{mny^2}\cdot\left(\frac{-A_1}{\lambda_1-x}-\frac{A_2}{\lambda_1-x}\right)=
\frac{1}{mny^2}\cdot\left( \frac{-A_1}{\lambda_1}\sum_{i=0}^{\infty}\left(\frac{x}{\lambda_1}\right)^i-\frac{A_2}{\lambda_2}\sum_{i=0}^{\infty}\left(\frac{x}{\lambda_2}\right)^i\right).\label{eqinprevious}
\end{align}
If $g_k(y)$ denotes the coefficient of $x^k$ in $F(x,y)$, then using Equation \eqref{eqinprevious} we have that
\begin{align*}
g_k(y)=&\frac{1}{mny^2}\cdot\left(\frac{-A_1}{\lambda_1}\left(\frac{1}{\lambda_1}\right)^k-\frac{A_2}{\lambda_2}\left(\frac{1}{\lambda_2}\right)^k\right)\\
=& \frac{1}{\lambda_1 \sqrt{(1+(m+n)y)^2-4mny^2}} \left(\frac{2(mny^2)}{(1+(m+n)y)-\sqrt{(1+(m+n)y)^2-4(nmy^2)}}\right)^k\\
& +\frac{-1}{\lambda_2 \sqrt{(1+(m+n)y)^2-4mny^2}} \left(\frac{2(mny^2)}{(1+(m+n)y)+\sqrt{(1+(m+n)y)^2-4(nmy^2)}}\right)^k.\qedhere
\end{align*}
\end{proof}

To complete the proof of Theorem \ref{thm:gaussian} we make use the following result from \cite{DDKMV}.

\begin{theorem} \label{thm:DDKMV}\cite{DDKMV}*{Theorem 1.8} Let $\kappa$ be a fixed positive integer. For each $n$, let a discrete random variable $Y_n$ in $I_{n}=\{1,2,\ldots,n\}$  have
\begin{align*}
{\rm Prob}(Y_n=j)\ = \ \begin{cases}p_{j,n}/\sum_{j=1}^np_{j,n}&\text{{\rm if} $j\in I_n$}\\ 0&\text{{\rm otherwise}}\end{cases}
\end{align*}
for some positive real numbers $p_{1,n}, p_{2,n}, \ldots, p_{n,n}$. Let $g_n(y):=\sum_j p_{j,n}y^j$.

If $g_n$ has the form $g_n(y) = \sum_{i=1}^\kappa q_i(y)\alpha_i^n(y)$ where
\begin{enumerate}
\item[(i)]  for each $i \in \{1, \ldots, \kappa\}, q_i, \alpha_i: \mathbb{R} \to \mathbb{R}$ are three times differentiable functions which do not depend on $n$;
\item[(ii)]  there exists some small positive $\epsilon$ and some positive constant $\lambda < 1$ such that for all $y \in I_{\epsilon} = [1-\epsilon, 1 + \epsilon], |\alpha_1(y)| > 1$ and $|\frac{\alpha_i(y)}{\alpha_1(y)}| < \lambda < 1$ for all $i=2, \ldots, \kappa$;
\end{enumerate}
then
the mean $\mu_n$ and variance $\sigma_n^2$ of $Y_n$ both grow linearly with $n$. Specifically,
\begin{align*}
\mu_n \ =\ Cn + d + o(1), \ \ \ \ \sigma_n^2 \ =\ C^\prime n + d^\prime + o(1)
\end{align*}
where
\begin{align*}
C&\ = \  \frac{\alpha_1'(1)}{\alpha_1(1)}, \  d \ = \  \frac{q_1'(1)}{q_1(1)} \nonumber\\
C^\prime &\ = \  \frac{d}{dy}\left. \left(\frac{y\alpha_1'(y)}{\alpha_1(y)} \right)\right\vert_{y=1} \ = \  \frac{\alpha_1(1)[\alpha_1'(1)+ \alpha_1''(1)]-\alpha_1'(1)^2}{\alpha_1(1)^2}\nonumber\\
d^\prime &\ = \  \frac{d}{dy}  \left. \left(\frac{yq_1'(y)}{q_1(y)} \right) \right\vert_{y=1} \ = \  \frac{q_1(1)[q_1'(1)+ q_1''(1)]-q_1'(1)^2}{q_1(1)^2}.
\end{align*}
Moreover, if
\begin{enumerate}
\item[(iii)]    $\alpha_1'(1) \neq 0$ and $\frac{d}{dy}\left[ \frac{y\alpha_1'(y)}{\alpha_1(y)}\right]|_{y=1} \neq 0$, i.e., $C,C'>0$,
\end{enumerate}
then
as $n \to \infty$, $Y_n$ converges in distribution to a normal distribution.
\end{theorem}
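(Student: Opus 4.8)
The plan is to read off the moments of $Y_n$ from its probability generating function, exploiting the fact that hypothesis (ii) collapses $g_n$ to its leading summand up to an exponentially small error. Write $G_n(y) = g_n(y)/g_n(1)$ for the probability generating function of $Y_n$; since $Y_n$ is supported on the finite set $I_n$, $G_n$ is a polynomial and all moments exist. The central device is the Euler operator $D := y\,\frac{d}{dy}$, which under the substitution $y=e^t$ is exactly $\frac{d}{dt}$, so that $\left.(D^j \log G_n)\right|_{y=1}$ equals the $j$-th cumulant $\kappa_j^{(n)}$ of $Y_n$; in particular $\mu_n = \left.(D\log G_n)\right|_{y=1}$ and $\sigma_n^2 = \left.(D^2\log G_n)\right|_{y=1}$. (One checks directly that $\left.D\log G_n\right|_{y=1} = G_n'(1)$ and $\left.D^2\log G_n\right|_{y=1} = G_n''(1)+G_n'(1)-G_n'(1)^2$, which are the mean and variance.) Because $D$ annihilates constants, we may replace $\log G_n$ by $\log g_n$ throughout.

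First I would isolate the leading term. On the interval $I_\epsilon$ of hypothesis (ii), factor
\[g_n(y) = q_1(y)\alpha_1(y)^n\Big(1 + R_n(y)\Big), \qquad R_n(y) = \sum_{i=2}^\kappa \frac{q_i(y)}{q_1(y)}\Big(\frac{\alpha_i(y)}{\alpha_1(y)}\Big)^n.\]
Since $|\alpha_i(y)/\alpha_1(y)| < \lambda < 1$ uniformly on $I_\epsilon$ and the $q_i,\alpha_i$ are continuous (with $q_1$ and $\alpha_1$ bounded away from $0$ there), $R_n$ together with its first three derivatives is $O(\lambda^n)$ uniformly on $I_\epsilon$; the polynomial-in-$n$ factors created by differentiating $(\alpha_i/\alpha_1)^n$ are absorbed by the geometric decay $\lambda^n$. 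Taking logarithms gives
\[\log g_n(y) = n\log\alpha_1(y) + \log q_1(y) + O(\lambda^n)\]
uniformly on $I_\epsilon$, and the same estimate survives after applying $D$, $D^2$, $D^3$. Evaluating at $y=1$ and using $\left.D\log\alpha_1\right|_{y=1} = \alpha_1'(1)/\alpha_1(1)$, $\left.D\log q_1\right|_{y=1} = q_1'(1)/q_1(1)$, together with $\left.D^2\log\alpha_1\right|_{y=1} = \frac{d}{dy}\big(\tfrac{y\alpha_1'(y)}{\alpha_1(y)}\big)\big|_{y=1}$ and $\left.D^2\log q_1\right|_{y=1} = \frac{d}{dy}\big(\tfrac{yq_1'(y)}{q_1(y)}\big)\big|_{y=1}$, reads off exactly $\mu_n = Cn + d + O(\lambda^n)$ and $\sigma_n^2 = C'n + d' + O(\lambda^n)$, which is the asserted linear growth with $o(1)$ error.

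For the limiting distribution I would invoke the continuity theorem for moment generating functions (Curtiss's theorem). Let $K_n(t) = \log G_n(e^t)$, so $K_n^{(j)}(0)=\kappa_j^{(n)}$; the analysis above shows $\kappa_j^{(n)} = n\,\left.(D^j\log\alpha_1)\right|_{y=1} + \left.(D^j\log q_1)\right|_{y=1} + O(\lambda^n)$, so every low-order cumulant grows at most linearly in $n$. Writing $Y_n' = (Y_n - \mu_n)/\sigma_n$, its cumulant generating function is $K_{Y_n'}(t) = -t\mu_n/\sigma_n + K_n(t/\sigma_n)$; a third-order Taylor expansion of $K_n$ about $0$ gives
\[K_{Y_n'}(t) = \frac{t^2}{2} + \frac{t^3}{6\sigma_n^3}K_n'''(\xi_n)\]
for some $\xi_n$ between $0$ and $t/\sigma_n$, the first two terms cancelling the mean and matching the variance. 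Here condition (iii) enters decisively: $C'>0$ forces $\sigma_n^2 \to \infty$, so $\sigma_n^3 = (C'n)^{3/2}(1+o(1))$, while $K_n'''(\xi_n) = O(n)$ uniformly because $\xi_n \to 0$ and $D^3\log\alpha_1$, $D^3\log q_1$ are continuous near $y=1$. Thus the remainder is $O(n^{-1/2})\to 0$, so $K_{Y_n'}(t)\to t^2/2$ on a neighborhood of $0$, which is the cumulant generating function of the standard normal, and Curtiss's theorem yields $Y_n' \Rightarrow N(0,1)$.

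The main obstacle is the uniform error control in the second paragraph: one must verify that the subdominant sum $R_n$ and its derivatives up to order three all remain $O(\lambda^n)$ across the entire neighborhood $I_\epsilon$, and that passing to logarithms (hence dividing by $q_1$ and $\alpha_1$) preserves this estimate. This is precisely where hypotheses (i) and (ii) are used in full force: three-fold differentiability guarantees that the remainder terms have bounded derivatives on the compact interval, and the strict gap $\lambda < 1$ ensures that the factors of $n$ produced by differentiating $\alpha_i(y)^n/\alpha_1(y)^n$ are swallowed by the geometric decay. Once this uniform estimate is secured, the remaining computations are routine differentiations of $\log\alpha_1$ and $\log q_1$ at $y=1$.
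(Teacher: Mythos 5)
This paper does not prove the statement at all---it is imported verbatim from \cite{DDKMV}*{Theorem 1.8}---and your argument is essentially the standard proof given in that source: isolate the dominant term $q_1(y)\alpha_1(y)^n$ using the uniform gap $\lambda<1$ on $I_\epsilon$, read off $\mu_n$ and $\sigma_n^2$ from the first two derivatives of $\log g_n$ at $y=1$ (your Euler-operator bookkeeping $D=y\,\tfrac{d}{dy}$ is a clean way to organize exactly those computations), and conclude normality by showing the moment generating function of $Y_n'$ converges to $e^{t^2/2}$ near $0$. Your one technically loose claim---that the derivatives of $R_n$ are $O(\lambda^n)$ rather than $O(n\lambda^{n-1})$---is harmless since any $\lambda'\in(\lambda,1)$ absorbs the polynomial factors, as you note, so the proposal is correct and takes the same route as the cited proof.
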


Throughout the following proof we will simplify some calculations with the substitutions:
\[s=m+n,\quad p=mn, \quad \mbox{and}\quad \beta = \alp.\]

\begin{proof}[Proof of Theorem \ref{thm:gaussian}]
To prove Gaussian behavior we need only show that $g_k(y)$ satisfies the hypothesis of Theorem \ref{thm:DDKMV}.
Note that \[g_k(y)=q_1(y)\alpha_1^k(y) + q_2(y)\alpha_2^k(y),\]
where 
\[  q_i(y)=
\frac{(-1)^{i+1}2mny^2}{\left(1+(m+n)y+(-1)^i \sqrt{(1+(m+n)y)^2-4mny^2}\right)\sqrt{(1+(m+n)y)^2-4mny^2}}\]
and
\[  \alpha_i(y)=
\frac{2mny^2}{1+(m+n)y+(-1)^i \sqrt{(1+(m+n)y)^2-4mny^2}}.\]

\begin{itemize}
\item Condition (i): For each $i =1,2$, the functions  $q_i(y)$ and $\alpha_i(y)$ are three times differentiable.

\item Condition (ii): Let $\epsilon$ be some small positive constant and assume  $y \in I_{\epsilon} = [1-\epsilon, 1 + \epsilon]$. 

By  Equation \eqref{lem:denom}, we see that $0<\alpha_2(y)<\alpha_1(y)$. Thus for some positive constant $\lambda$, $|\frac{\alpha_2(y)}{\alpha_1(y)}| < \lambda < 1$.
Next we show that $\alpha_1(y)>1$. We begin by noting that $py^2 >0$ and  $\sqrt{(1+sy)^2-4py^2}>1$ (by equation \eqref{lem:discr}).
Hence
\begin{align*}
0<&  4py^2(py^2+\sqrt{(1+sy)^2-4py^2}-1)\\
(1+sy)^2<&  4py^2(py^2+\sqrt{(1+sy)^2-4py^2}-1)+(1+sy)^2\\
(1+sy)^2<& 4p^2y^4+4py^2\sqrt{(1+sy)^2-4py^2}+(1+sy)^2 - 4py^2\\
(1+sy)^2<& (2py^2+\sqrt{(1+sy)^2-4py^2})^2\\
1+sy<&  2py^2+\sqrt{(1+sy)^2-4py^2}\\
1<&\frac{2py^2}{1+sy-\sqrt{(1+sy)^2-4py^2}}.
\end{align*}

\item Condition (iii): First we compute $C=\frac{\alpha_1'(1)}{\alpha_1(1)}$ and prove that it is not 0.
Use 
\[  \alpha_1(y) = \frac{2py^2}{1+sy-\sqrt{(1+sy)^2-4py^2}}  \]
and compute
\begin{align*}
\alpha_1'(y) =&\frac{4py}{1+sy-\sqrt{(1+sy)^2-4py^2}}-\frac{2py^2\left[s-\tfrac{1}{2}\left( (1+sy)^2-4py^2 \right)^{-1/2}(2s(1+sy)-8py)     \right]}{(1+sy-\sqrt{(1+sy)^2-4py^2})^2}.
\end{align*}
Substitute $y=1$, use a common denominator to add fractions, and the numerator of $\alpha_1'(1)$  simplifies to 
\begin{align*}
4p(1+s-\beta)-2p\left[s-\frac{2s(1+s)-8p}{2\beta} \right]
=&2p\left( 2(1+s-\beta) -s+\frac{s(1+s)-4p}{\beta}   \right)\\
=&\frac{2p}{\beta}(1+s-\beta)(\beta-1).
\end{align*}
Hence \[C=\frac{\alpha_1'(1)}{\alpha_1(1)}=\frac{\frac{2p(1+s-\beta)(\beta-1)}{\beta(1+s-\beta)^2}}
{\frac{2p}{1+s-\beta}}=\frac{\beta-1}{\beta}=\frac{\alp-1}{\alp}.
\]
Note that this final value is positive (in particular not zero)  (see Equation \eqref{lem:discr}).

Second we compute $C'=\frac{\alpha_1'(1)-\alpha_1''(1)}{\alpha_1(1)}-\left(\frac{\alpha_1'(1)}{\alpha_1(1)}\right)^2$  and prove that it is not 0. Note
\begin{align*}
\alpha_1''(1)=&\frac{4p\left(s+\frac{4p-s(1+s)}{\beta}\right)^2}{(1+s-\beta)^3}-\frac{8p\left(s+\frac{4p-s(1+s)}{\beta}\right)}{(1+s-\beta)^2}+\frac{4p}{1-s-\beta}
-\frac{2p\left(\frac{(-4p+s(1+s))^2}{\beta^3}+\frac{4p-s^2}{\beta}\right)}{(1+s-\beta)^2}\\
=&\frac{4p}{1+s-\beta}\left(   \frac{4p-s-s^2-\beta-4p+1+2s+s^2}{\beta(1+s-\beta)}  \right)^2
-\frac{2p}{(1+s-\beta)^2}\frac{4p}{\beta^3}\\
=&\frac{4p}{(1+s-\beta)\beta^2}-\frac{8p^2}{(1+s-\beta)^2\beta^3}
\end{align*}
and using this we find that 
\begin{align*}
\frac{\alpha_1'(1)-\alpha_1''(1)}{\alpha_1(1)}=&\frac{\frac{2p(\beta-1)}{\beta(1+s-\beta)}+\frac{4p}{(1+s-\beta)\beta^2}-\frac{8p^2}{(1+s-\beta)^2\beta^3}}
{\frac{2p}{1+s-\beta}}
=\frac{\beta-1}{\beta}+\frac{\beta-1-s}{\beta^3}.
\end{align*}
Finally 
\begin{align}
C'=\frac{\alpha_1'(1)-\alpha_1''(1)}{\alpha_1(1)}-\left(\frac{\alpha_1'(1)}{\alpha_1(1)}\right)^2=&\frac{\beta-1}{\beta}+\frac{\beta-1-s}{\beta^3}-\left(\frac{\beta-1}{\beta}\right)^2\\
=&\frac{\beta^2-1-s}{\beta^3}\label{eqn:g0}\\
=&\frac{s(1+s)-4p}{\beta^3}.
\end{align}
By considering Equation \eqref{eqn:g0} with \eqref{lem:betay} we see that $C'>0$.
\end{itemize}
Therefore, by satisfying the conditions of Theorem \ref{thm:DDKMV}, we have completed our proof.
\end{proof}

\section{Directions for future research}
In this paper we considered the construction of $(n,m)$-bin sequences. For $d\in\mathbb{Z}_+$, one natural extension is to consider ${\bf{N}}=(n_1,n_2,\ldots,n_d)\in\mathbb{Z}_+^d$ and define ${\bf{N}}$-bin sequences in an analogous way to that of $(n,m)$-bin sequences. One could then study the ${\bf{N}}$-bin decompositions of positive integers. Namely, do these decompositions exist and are they unique? What is the behavior of the average number of summands used in the ${\bf{N}}$-bin legal decompositions, i.e.~is it Gaussian? 

Another further generalization would be to consider introducing a new parameter $s\in\mathbb{N}$ which accounts for the number of bins which must be skipped between summands used in a legal ${\bf{N}}$-bin decomposition. We call such decompositions the $(s,{\bf{N}})$-bin with skip decompositions. Note that when $s=1$ and ${\bf{N}}=(n,m)$, the $(s,{\bf{N}})$-bin with skip decompositions are exactly the $(n,m)$-bin decompositions and when $s\in\mathbb{Z}_+$ and ${\bf{N}}=b\in\mathbb{Z}_+$, the $(s,{\bf{N}})$-bin with skip decompositions are exactly the $(s,b)$-Generacci decompositions. Therefore the study of the $(s,{\bf{N}})$-bin with skip decompositions provides natural ways to generalize prior results in this area.

\appendix
\section{Negative Coefficient in Linear Reccurence}\label{appendix}

\begin{proposition}
The $(2,3)$-bin sequence is not a Positive Linear Recurrence Sequence (PLRS). 
\end{proposition}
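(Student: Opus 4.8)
The plan is to argue by contradiction, exploiting the fact that a genuine PLRS must have convergent successive ratios while the $(2,3)$-bin sequence does not. Recall that a Positive Linear Recurrence Sequence satisfies a recurrence $a_x = c_1 a_{x-1} + \cdots + c_L a_{x-L}$ whose coefficients are nonnegative integers with $c_1 \ge 1$ and $c_L \ge 1$. The companion matrix of such a recurrence is then a nonnegative integer matrix whose associated digraph carries a loop at the first vertex (since $c_1 \ge 1$) together with a cycle through all $L$ vertices (since $c_L \ge 1$); hence it is irreducible and aperiodic, i.e.\ primitive. By the Perron--Frobenius theorem it has a unique eigenvalue $\lambda > 0$ of maximal modulus, and $\lambda$ is simple. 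Since every $a_x$ is positive, this forces $a_x = c\lambda^x(1+o(1))$ with $c>0$, and in particular $\lim_{x\to\infty} a_{x+1}/a_x = \lambda$ exists. It therefore suffices to show that this limit fails to exist for the $(2,3)$-bin sequence.

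To do this I would pass through the explicit single recurrence of Theorem~\ref{thm:single recurrence}, which for $(n,m)=(2,3)$ reads $a_x = 6a_{x-5} - 6a_{x-10}$ for $x \ge 10$. Restricting to a fixed residue class modulo $5$, each subsequence $b_k = a_{5k+i}$ satisfies the second-order recurrence $b_k = 6b_{k-1} - 6b_{k-2}$, whose characteristic roots are $r_\pm = 3 \pm \sqrt3$. Hence $a_{5k+i} = A_i r_+^k + B_i r_-^k$ for constants $A_i,B_i$ fixed by initial data. Reading $a_0=1,a_5=6$; $a_1=2,a_6=9$; and $a_2=3,a_7=12$ off the listed sequence and solving the three $2\times2$ systems gives
\[ A_0 = \frac{1+\sqrt3}{2}, \qquad A_1 = \frac{2+\sqrt3}{2}, \qquad A_2 = \frac{3+\sqrt3}{2}, \]
each of which is positive.

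Because $0 < |r_-| = 3-\sqrt3 < 3+\sqrt3 = r_+$ and $A_0,A_1,A_2 > 0$, the dominant root controls the growth within each residue class, so
\[ \lim_{k\to\infty} \frac{a_{5k+1}}{a_{5k}} = \frac{A_1}{A_0}, \qquad \lim_{k\to\infty} \frac{a_{5k+2}}{a_{5k+1}} = \frac{A_2}{A_1}. \]
If the sequence were a PLRS, both of these subsequential limits of $a_{x+1}/a_x$ would have to equal the common value $\lambda$, forcing $A_1/A_0 = A_2/A_1$, i.e.\ $A_1^2 = A_0 A_2$. But $A_1^2 = (7+4\sqrt3)/4$ while $A_0 A_2 = (6+4\sqrt3)/4$, so $A_1^2 - A_0A_2 = 1/4 \ne 0$ (equivalently $7 \ne 6$). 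Thus $a_{x+1}/a_x$ has two distinct subsequential limits and cannot converge, contradicting the Perron--Frobenius conclusion; therefore the $(2,3)$-bin sequence is not a PLRS.

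I expect the one delicate point to be the justification that a genuine PLRS has a convergent ratio $a_{x+1}/a_x$: this is precisely where the defining hypotheses $c_1 \ge 1$ and $c_L \ge 1$ enter, guaranteeing primitivity of the companion matrix and hence a strictly dominant Perron root (it is exactly the absence of a positive $c_1$-type term in the minimal recurrence $a_x=6a_{x-5}-6a_{x-10}$ that produces the period-$5$, non-convergent ratio behavior). The remainder is the short closed-form computation above, with the contradiction reducing to the numerical inequality $A_1^2 - A_0 A_2 = 1/4 \ne 0$.
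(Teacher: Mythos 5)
Your proof is correct, and it takes a genuinely different route from the paper's. The paper argues algebraically: it invokes the single recurrence of Theorem~\ref{thm:single recurrence}, shows the characteristic polynomial is irreducible over $\mathbb{Q}$ by Eisenstein's criterion (hence the recurrence is minimal), and then uses Lemma~B.1 of \cite{DDKMMV} to reduce the claim to showing that no polynomial multiple of the characteristic polynomial has the PLR sign pattern $y^{k+L}-\sum_i c_iy^i$ with all $c_i\geq 0$; this is finished by an induction showing the coefficients of any such multiple become increasingly negative along an arithmetic progression, a contradiction. You instead argue analytically: the companion matrix of any positive linear recurrence is primitive (the loop from $c_1\geq 1$ plus the long cycle from $c_L\geq 1$ give irreducibility and aperiodicity), so Perron--Frobenius forces $a_x=c\lambda^x(1+o(1))$ with $c>0$ (the coefficient $c$ is positive because the positive initial vector pairs nontrivially with the positive left Perron eigenvector) and hence convergence of $a_{x+1}/a_x$; meanwhile the $(2,3)$-bin sequence, split into residue classes mod $5$ under $a_x=6a_{x-5}-6a_{x-10}$, exhibits two distinct subsequential ratio limits since $A_1^2-A_0A_2=\tfrac14\neq 0$. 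Your closed-form constants check out against the listed initial terms ($a_0=1$, $a_5=6$, $a_1=2$, $a_6=9$, $a_2=3$, $a_7=12$), and the logic that distinct subsequential limits preclude convergence is airtight. What each approach buys: the paper's is purely algebraic and rules out all admissible recurrences via minimal-polynomial machinery, with no asymptotics; yours bypasses both Eisenstein and Lemma~B.1, makes the obstruction concrete and quantitative (the ratio $a_{x+1}/a_x$ oscillates with period $5$ forever, which is exactly what a PLRS forbids), and transfers immediately to other $(n,m)$ by the same two-by-two computations. One further point in your favor: the paper's own proof actually works with the recurrence $a_x=4a_{x-3}-2a_{x-6}$, which by Theorem~\ref{thm:single recurrence} belongs to the $(1,2)$- and $(2,1)$-bin sequences rather than to the $(2,3)$-bin sequence named in the proposition (the introduction indicates the appendix was intended for $n=2$, $m=1$); your argument is the one that addresses the proposition as literally stated, using the correct recurrence $a_x=6a_{x-5}-6a_{x-10}$.
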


\begin{proof}
By Equation \eqref{eq:single recurrence} the recurrence relation for the $(2,3)$-bin sequence is $a_x=4a_{x-3}-2a_{x-6}$. This has characteristic equation $y^6-4y^3+2$. By Eisenstein's criterion the polynomial $y^6-4y^3+2$ is irreducible in $\mathbb{Q}[y]$ since there exists a prime $p=2$ such that $p$ divides all non-leading coefficients of the polynomial, does not divide the leading coefficient, and whose square does not divide the constant term. Thus the polynomial $y^6-4y^3+2$ can not be factored into the product of non-constant polynomials with rational coefficients. Moreover, since this equation is irreducible in $\mathbb{Q}[y]$ our recurrence relation is minimal.   By applying Lemma B.1 in \cite{DDKMMV}, it is enough to show that all multiples of the characteristic equation cannot have the form
\[  y^{k+6} - \sum_{i=0}^{k+5} c_iy^i \]
with all $c_i>0$.

Consider the multiple of the characteristic equation (with $p_k\neq 0$):
\begin{align*}
    \sum_{i=0}^{k+6}c_iy^i &= \ \left(\sum_{j=0}^k p_j y^j\right)\left(y^6-4y^3+2\right)\\
    &=\ \sum_{i=0}^{k+6}\left( p_{i-6}-4p_{i-3}+2p_i  \right)y^i
\end{align*}
Thus $c_i =p_{i-6}-4p_{i-3}+2p_i $. Note that $p_i = 0$ when $i<0$ and when $i>k$. 

We will proceed by contradiction. Hence we assume $c_{k+6}>0$, and $c_i\leq 0$ whenever $i<k+6$.
Let $t$ be the smallest non-negative integer such that $p_t\neq 0$. Note that $0\leq t\leq k$.

\bigskip
We claim that for all integers $j\geq 0$ with $t+3j<k+6$, $p_{t+3j}<p_{t+3j-3}$ and $p_{t+3j}<0$. In other words the coefficients become increasingly negative. 
The proof of this claim is by induction. 

\noindent {\bf Base case $n=0$:}
By definition of $t$, $c_t =p_{t-6}-4p_{t-3}+2p_t=2p_t $. Hence $2p_t=c_t<0$, because $p_t\neq0$ and $t<k+6$. Thus $p_t<0=p_{t-3}$.

\noindent {\bf Base case $n=1$:} We have 
\begin{align*}
    c_{t+3} = p_{t-3} -4p_t +2p_{t+3} & \leq \ 0\\
    2p_{t+3} &\leq \ 4p_t\\
    p_{t+3} &\leq \ 2p_t <p_t
\end{align*}
where the last inequality is true because $p_t<0$.

\noindent {\bf Inductive Step:} We have
\begin{align}
    c_{t+3j} = p_{t+3j-6} -4p_{t+3j-3} +2p_{t+3j} & \leq \ 0\label{eq:contr_assume}\\
    2p_{t+3j} & \leq \ 4p_{t+3j-3} - p_{t+3j-6}\\
    2p_{t+3j} & \leq \ 4p_{t+3j-3} - p_{t+3j-3}\label{eq:ind_assum}\\
    p_{t+3j} & \leq \ 1.5p_{t+3j-3} \\
    p_{t+3j} & \leq \ p_{t+3j-3} \label{eq:zero}
\end{align}

Step \eqref{eq:contr_assume} is true because $t+3j<k+6$. Step \eqref{eq:ind_assum} is true by the inductive assumption. Finally step \eqref{eq:zero} is true because $p_{t+3j-3}<0$.

\bigskip

To establish our contradiction, choose $j^*$ such that $k<t+3j^*<k+6$.
Thus we have 
\begin{align}
    c_{t+3j^*} = p_{t+3j^*-6} -4p_{t+3j^*-3} +2p_{t+3j^*} & \leq \ 0\label{eq:contr_assume2}\\
    p_{t+3j^*-6} & \leq \ 4p_{t+3j^*-3} \label{eq:t}\\
     p_{t+3j^*-6} & \leq \ p_{t+3j^*-3} \label{eq:zero2}
\end{align}
Step \eqref{eq:contr_assume2} is true because $t+3j^*<k+6$. Step \eqref{eq:t} is true because $p_i=0$ when $i>k$. Step \eqref{eq:zero2} is true because $p_{t+3j^*-3}<0$. But this last line  contradictions  the claim we just proved above. 
 \end{proof}

\end{document}